\definecolor{ddarkbrown}{rgb}{0.5,0.2,0.05} \definecolor{bbluegray}{rgb}{0.05,0,0.5}
\newtheorem{theorem}{Theorem}[section]
\newtheorem{proposition}[theorem]{Proposition}
\newtheorem{observation}[theorem]{Observation}
\newtheorem{definition}[theorem]{Definition}
\newtheorem{lemma}[theorem]{Lemma}
\newtheorem{corollary}[theorem]{Corollary}
\newcommand{\LP}[1]{\ensuremath{\operatorname{LP}\sb{#1}}}
\newcommand{\BEAS}{\begin{eqnarray*}}
\newcommand{\EEAS}{\end{eqnarray*}}
\newcommand{\BEA}{\begin{eqnarray}}
\newcommand{\EEA}{\end{eqnarray}}
\newcommand{\BEQ}{\begin{equation}}
\newcommand{\EEQ}{\end{equation}}
\newcommand{\BIT}{\begin{itemize}}
\newcommand{\EIT}{\end{itemize}}
\newcommand{\BNUM}{\begin{enumerate}}
\newcommand{\ENUM}{\end{enumerate}}
\newcommand{\BA}{\begin{array}}
\newcommand{\EA}{\end{array}}
\newcommand{\ones}{\mathbf 1}
\newcommand{\reals}{{\mathbb R}}
\newcommand{\Co}{{\mathop {\bf Co}}}
\newcommand{\Ext}{{\mathop {\bf Ext}}}
\newcommand{\argmin}{\mathop{\rm argmin}}
\let \oldsection \section
\renewcommand{\section}{\vspace{3ex plus 1ex}\oldsection}
\def\@setdate{\@date}
\begin{document}

\title{Restarting Frank-Wolfe: Faster Rates under H\"olderian Error Bounds}

\author{Thomas Kerdreux $^{\dagger,\ast}$}
\address{Zuse Institute Berlin \& Technische Universit\"at Berlin, Germany}
\email{thomaskerdreux@gmail.com}

\author{Alexandre d'Aspremont$^{\ddagger,\mathsection}$}
\address{CNRS \& D.I., UMR 8548,\vskip 0ex
\'Ecole Normale Sup\'erieure, Paris, France.}
\email{aspremon@ens.fr}

\author{Sebastian Pokutta$^{\dagger,\ast}$}
\address{Zuse Institute Berlin \& Technische Universit\"at Berlin, Germany}
\email{pokutta@zib.de}


\keywords{}

\date{$^\dagger$Zuse Institute, Berlin, Germany.\\
\indent$^\ast$Technische Universit{\"a}t, Berlin, Germany.\\
\indent$^\ddagger$CNRS UMR 8548.\\
\indent$^\mathsection$D.I. \'Ecole Normale Sup\'erieure, Paris, France.}

\subjclass[2010]{}

\maketitle

\begin{abstract}
Conditional Gradient algorithms (aka Frank-Wolfe algorithms) form a classical set of methods for constrained smooth convex minimization due to their simplicity, the absence of projection steps, and competitive numerical performance.
While the vanilla Frank-Wolfe algorithm only ensures a worst-case rate of $\mathcal{O}(1/\epsilon)$, various recent results have shown that for strongly convex functions on polytopes, the method can be slightly modified to achieve linear convergence. However, this still leaves a huge gap between sublinear $\mathcal{O}(1/\epsilon)$ convergence and linear $\mathcal{O}(\log 1/\epsilon)$ convergence to reach an $\epsilon$-approximate solution.
Here, we present a new variant of Conditional Gradient algorithms, that can dynamically adapt to the function's geometric properties using restarts and smoothly interpolates between the sublinear and linear regimes.
These interpolated convergence rates are obtained when the optimization problem satisfies a new type of error bounds, which we call \textit{strong Wolfe primal bounds}. They combine geometric information on the constraint set with H\"olderian Error Bounds on the objective function.
\end{abstract}

\section{Introduction}
\label{sec:introduction}
We consider smooth constrained convex minimization, solving problems of the form 
\begin{equation}\label{eq:optim_problem}
\min_{x \in \mathcal C} f(x),
\end{equation}
where $f$ is a smooth convex function and $\mathcal C$ is a compact convex set. As soon as the geometry of $\mathcal C$ is reasonably complicated, so that projections onto the set are computationally expensive, projection-free first-order methods such as Conditional Gradient algorithms \citep{levitin1966constrained} (also known as Frank-Wolfe methods \citep{Fran56}) become an efficient alternative as they only require first-order access to the function under consideration as well as access to an efficient linear optimization oracle for the feasible region $\mathcal C \subseteq \reals^n$ which, given a linear objective $c \in \reals^n$, outputs $\arg\min_{x \in \mathcal C} c^T x$.

In order to reach an $\epsilon$-approximate solution $\hat x$, so that $f(\hat x) - f(x^*) < \epsilon$, where $x^*$ is an optimal solution, the standard Frank-Wolfe algorithm requires a number of iterations of order $O(1/\epsilon)$, that cannot be improved in general \citep{canon1968tight,jaggi2013revisiting}.
A series of recent works (see e.g., \citep{garber2013linearly,FW-converge2015}; see also \citep{lan2014conditional} for conditional gradient sliding) showed that when $f$ is strongly convex the convergence rate of the standard case can be improved to $O(\log 1/\epsilon)$ and various extensions further improved upon these results for special cases (see e.g., \citep{lacoste2013block,garber2015faster,Freund2016,garber2016linear,braun2017lazifying,lan2017conditional,bashiri2017decomposition,kerdreux2018frank,braun2019blended,YiAdapativeFW,carderera2019locally}), applying Frank-Wolfe methods to machine learning problems (e.g., \cite{joulin2014efficient,shah2015multi,osokin2016minding,locatello2017unified,freund2017extended,locatello2017greedy,miech2017learning}). Nonetheless, these results left a wide gap between the linear $O( \log 1/\epsilon)$ rate and the sub-linear $O(1/\epsilon)$ rate.
Note that \cite{kerdreux2021projection} also obtain such interpolated rates when analysing the vanilla Frank-Wolfe algorithm on (locally) uniformly convex constraint sets, extending the known accelerated regimes of Frank-Wolfe when the constraint set is strongly convex.

Here, we present a new variant of the Conditional Gradient method using the scaling argument of the parameter-free Lazy Frank-Wolfe variant in \citep{braun2017lazifying,braun2019blended}, together with a restart scheme similar to that used for gradient methods in e.g., \citep{Nemi85,Gise14,ODon15,Ferc16,roulet2020sharpness}. This yields an algorithm that dynamically adapts to the local properties of the function and the feasible region around the optimum. The convergence proof relies on two key conditions. One is a scaling inequality (Definition \ref{def:Scaling}) used to characterize the regularity of $\mathcal{C}$ in many Frank-Wolfe complexity bounds which holds on e.g., polytopes and strongly convex sets. The other is a local growth condition which is known to hold generically for sub-analytic functions by the {\L}ojasiewicz factorization lemma (see e.g., \citep{bolte2007lojasiewicz}) and controls for example the impact of restart schemes as in \citep{roulet2020sharpness}.

Earlier work showed that a sharpness condition derived from the {\L}ojasiewicz lemma could be used to improve convergence rates of gradient methods (see e.g., \citep{Nemi85,bolte2007lojasiewicz,Atto10,Baus16,karimi2016linear} for an overview). However, to achieve improved rates, these methods required exact knowledge of the constants appearing in the condition, which are in practice typically not observed. In contrast to this, as in \citep{roulet2020sharpness,chen2018sadagrad}, we show using robust restart schemes that our algorithm does not require knowledge of these constants, thus making it essentially parameter-free.

\subsection*{Contributions}
This paper is a journal version of \citep{kerdreux2019restarting}.
Our contributions can be summarized as follows.

\begin{enumerate}
  \item \emph{Strong Wolfe primal bound.} Under generic assumptions, we derive strong Wolfe primal gap bounds generalizing those obtained from strong convexity of $f$. These bounds are obtained by combining a {\L}ojasiewicz growth condition on $f$ with a scaling inequality on $\mathcal{C}$, and continuously interpolate between the convex and strongly convex cases. 
  Section \ref{sec:prelims} and \ref{sec:LGC} provide a more in-depth approach than in \citep{kerdreux2019restarting}.
  
  \item \emph{Fractional Frank-Wolfe Algorithms.} We then define a new Conditional Gradient algorithm that dynamically adapts to the parameters of these strong Wolfe primal bounds using a restart scheme. The resulting algorithm achieves either sub-linear (i.e., $O(1/\epsilon^q)$ with $q \leq 1$) or linear convergence rates depending on the strong Wolfe primal gap parameters. The exponent $q$ depends on the growth of the function around the optimum, so the function is not required to be strongly convex in the traditional sense. 
  In particular, we obtain linear rates (depending on the parameters) for non-strongly convex functions. Our rates are satisfied after a mild burn-in phase that does not depend on the target accuracy.
  We extend the results of \citep{kerdreux2019restarting} to all the known settings where versions of Frank-Wolfe enjoy linear convergence rate under strong convexity assumptions of the objective function (see Section \ref{sec:Frac_Frank_Wolfe}).
  
  \item \emph{Robust restarts.} Restart schedules often heavily depend on the value of unknown parameters. We show that because Frank-Wolfe methods naturally produce a stopping criterion in the form of the strong Wolfe gap, our restart schemes are robust and do not require knowledge of the unobserved strong Wolfe primal gap bound parameters.
  
  \item We generalize our approach in \citep{kerdreux2019restarting} to H\"older smooth functions.

\end{enumerate}

\subsection*{Outline}
In Section~\ref{sec:prelims} we briefly recall key notions and notation. We then describe our strong Wolfe primal bounds in Section~\ref{sec:LGC} and present the Fractional Away-step Frank-Wolfe Algorithm in Section~\ref{sec:sharpAWF} along with the associated restart schemes in Section~\ref{sec:restart-schemes}.
Section~\ref{sec:Holder} generalizes the analysis to H\"older smooth functions.
Section~\ref{sec:Frac_Frank_Wolfe} investigates the cases where the constraint set is not a polytope or the optimum is not necessarily on the boundary of the constraint set. They are known cases where additional structure on $f$ leads to accelerated convergence rates of the (vanilla) Frank-Wolfe algorithm.

\section{Preliminaries}
\label{sec:prelims}

Consider the following optimization problem
\BEQ\label{eq:prob}
\BA{ll}
\mbox{minimize} & f(x)\\
\mbox{subject to} & x \in \mathcal{C}
\EA\EEQ
in the variables $x\in\reals^n$, where $\mathcal{C}\subset\reals^n$ is a compact convex set and $f:\reals^n \rightarrow \reals$ is a convex function.
For the sake of simplicity, we will consider that $\mathcal{C}$ is full-dimensional.
Let $X^*$ be the set of minimizers of $f$ over $\mathcal{C}$. 
We assume that the following linear minimization oracle
\BEQ\label{eq:lmo}
\LP{\mathcal{C}}(x)\triangleq \argmin_{z\in \mathcal{C}} x^Tz
\EEQ
can be computed efficiently. By assumption here, we have $\mathcal{C}=\Co(\Ext(\mathcal{C}))$ where $\Co(\cdot)$ is the convex hull, $\Ext(\cdot)$ the set of extreme points, and Carath\'eodory's theorem shows that every point $x$ of $\mathcal{C}$ can be written as a convex combination of at most $n+1$ points in $\Ext(\mathcal{C})$ although a given representation can contain more points. We call these points the {\em support of $x$} in $\mathcal{C}$. We say that a support $\mathcal{S}$ is {\em proper} when the weights that compose the convex combination of $x$ are all positive.
\begin{definition}[Proper Support]
Consider a compact convex set $\mathcal{C}$ and $x\in\mathcal{C}$. A finite set $\mathcal{S}=\{v_i \mid i \in I \}$ with
  $v_i \in \Ext(\mathcal{C})$ for some finite index set \(I\), is a
  proper support of $x$ iff
\[
x=\sum_{i\in \mathcal{S}} \lambda_i v_i,\quad \mbox{where $\ones^T\lambda=1$ and
  $\lambda_i>0$ for all $i \in I$}.
\]
\end{definition}

We now define the \emph{strong Wolfe gap} as follows.

\begin{definition}[Strong Wolfe Gap]\label{def:strong_wolfe_gap}
Let $f$ be a smooth convex function, $\mathcal{C}$ a polytope, and let $x \in \mathcal{C}$ be arbitrary. Then the \emph{strong Wolfe gap $w(x)$ over $\mathcal{C}$} is defined as
\BEQ\label{eq:strong_wolfe_gap}
w(x) \triangleq\min_{\mathcal{S}\in\mathcal{S}_x} \max_{y \in \mathcal{S}, z \in \mathcal{C}} \nabla f(x)^T (y-z)
\EEQ
where $x \in \Co(\mathcal{S})$ and $\mathcal{S}_x = \{\mathcal{S}~|~\mathcal{S}\subset\Ext(\mathcal{C}), \mbox{is finite and $x$ a proper combination of the elements of $\mathcal{S}$}\}$, the set of proper supports of $x$.
We also write 
\[
w(x,\mathcal{S}) \triangleq \max_{y \in \mathcal{S}, z \in \mathcal{C}} \nabla f(x)^T (y-z)
\]
given $\mathcal{S}\in\mathcal{S}_x$.
\end{definition}

By construction, we have $w(x) \leq w(x,\mathcal{S})$. Note also that for
$x\in\mathcal{C}$, the quantity $w(x,\mathcal{S})$ is the sum of the Frank-Wolfe
dual gap with the away dual gap in \citep{FW-converge2015} as shows
the following decomposition
\BEQ\label{eq:decomp_strong_gap}
w(x, \mathcal{S}) =  \underbrace{\max_{y \in \mathcal{S}} \nabla f(x)^T (y-x) }_{\text{away or Wolfe
  (dual) gap}} + \underbrace{\max_{z \in \mathcal{C}} \nabla f(x)^T
(x-z)}_{\text{Frank-Wolfe (dual) gap}}.
\EEQ
Note that only $w(x, \mathcal{S})$ is observed in practice, but we use $w(x)$ to simplify the primal bounds and the convergence proof. 
Also we write the Frank-Wolfe (dual) gap as
\BEQ\label{eq:Frank_Wolfe_Dual_Gap}
g(x) \triangleq \max_{z \in \mathcal{C}} \nabla f(x)^T
(x-z).
\EEQ
We first show the following lemma on $w(x, \mathcal{S})$ and $w(x)$.
This lemma justifies the use of strong Wolfe gaps as measure of optimality in Algorithm \ref{alg:frac-AFW} and \ref{algo:scheduled_fractional}.

\begin{lemma}\label{lem:zero-gap}
Let $x\in\mathcal{C}$ and $\mathcal{S}$ be proper support of $x$. 
We have that $w(x,\mathcal{S})=0$ if and only if $x$ is an optimal solution of problem~\eqref{eq:prob}.
In particular, $w(x)=0$ if and only if $x$ is an optimal solution of problem~\eqref{eq:prob}.
Write $x^*$ and optimal solution to \eqref{eq:optim_problem}, we have
\begin{equation}
    f(x) - f(x^*) \leq w(x, \mathcal{S}).
\end{equation}
\end{lemma}
\begin{proof}\label{proof:lem_zero_gap}
We can split $w(x,\mathcal{S})$ in two parts, with
\[
w(x,\mathcal{S}) =  \max_{y \in \mathcal{S}} \nabla f(x)^T (y-x) + \max_{z \in \mathcal{C}} \nabla f(x)^T (x-z).
\]
If $x \in \mathcal{C}$, then both summands are nonnegative.
Recall $g(x) = \max_{z \in \mathcal{C}} \nabla f(x) (x-z)$ is the Wolfe gap.

Let us first assume that $x$ is an optimal solution of problem~\eqref{eq:prob} and show that $w(x,\mathcal{S})=0$.
The first order optimality conditions implies that $\nabla f(x)^T(x-v)\leq 0$ for all $v\in\mathcal{C}$. 
Since this last quantity is exactly zero when $v=x$, we have $g(x)=0$.
Besides, let $h(x)\triangleq \max_{y \in \mathcal{S}} \nabla f(x)^T (y-x)$. If $\nabla f(x)=0$ we immediately get $h(x)=0$. Suppose then $\nabla f(x)\neq 0$, since $x$ is optimal, $\nabla f(x)^T (x-v_i)\leq 0$ for all $v_i\in \mathcal{S}$ and we can write
\[
x =  \sum_{\{i:\nabla f(x)^T (x-v_i)=0\}} \lambda_i v_i + \sum_{\{i:\nabla f(x)^T (x-v_i) < 0\}} \lambda_i v_i = (1-\mu) z_1 + \mu z_2
\]
for some $0\leq\mu\leq 1$, where $\nabla f(x)^T (x-z_1)=0$ and $\nabla
f(x)^T (x-z_2)<0$. Now $0=\nabla f(x)^T (x-x)=\mu \nabla f(x)^T
(x-z_2)$ implies $\mu=0$, hence $\nabla f(x)^T (x - v_i)= 0$ for all $i\in \mathcal{S}$, so $h(x)=0$. Thus we obtain that $x$ optimal implies $w(x)=0$. 

Conversely, let us assume that $w(x,\mathcal{S})=0$. we have
\BEAS
f(x)-f^\star &\leq& \nabla f(x)^T(x-x^\star)\\
& \leq & \max_{z \in \mathcal{C}} \nabla f(x)^T (x-z)\\
& \leq & \max_{y \in S, z \in \mathcal{C}} \nabla f(x)^T (y-z)\\
& = & w(x, \mathcal{S})
\EEAS
by convexity (where $x^\star$ is any optimal solution), and the fact that $x \in \Co(\mathcal{S})$. Hence $w(x, \mathcal{S}) = 0$ implies $x$ optimal. The corollary on $w(x)$ immediately follows by construction.
\end{proof}
A function $f$ is $L$-smooth when for any $x,y\in\mathcal{C}$
\BEQ\label{eq:function_smoothness}
\|f(x) - f(y)\|_*\leq L \|x-y\|.
\EEQ
Such regularity of the gradient can also be captured via curvature.  We recall the definition of \emph{away curvature} in \citep[Appendix D]{FW-converge2015}, with
\BEQ\label{eq:away_curvature}
C_f^A\triangleq\underset{\substack{x,s,v\in\mathcal{C}\\\eta\in [0,1] \\ y=x+\eta (s-v)}}{\text{sup }}{\frac{2}{\eta^2}\big( f(y)-f(x)-\eta\langle\nabla f(x),s-v\rangle \big)},
\EEQ
where $f$ and $\mathcal{C}$ are defined in problem~\eqref{eq:prob}
above; we will use this notion of curvature for analyzing those
algorithms utilizing away steps (Algorithm \ref{alg:frac-AFW}). 
Similarly (standard) \emph{curvature} $C_f$ \citep[Appendix C]{FW-converge2015} is defined as
\BEQ\label{eq:curvature}
C_f\triangleq\underset{\substack{x,v\in\mathcal{C}\\\eta\in [0,1] \\ y=x+\eta (v-x)}}{\text{sup }}{\frac{2}{\eta^2}\big( f(y)-f(x)-\eta\langle\nabla f(x),v-x\rangle \big)},
\EEQ
and is used to bound the complexity of the classical Frank-Wolfe
method (Algorithm \ref{alg:frac-FW}).

\section{H\"{o}lderian Error Bounds}\label{sec:LGC}
We now introduce growth conditions used to bound the complexity of our variant of the Frank-Wolfe algorithm when solving the constrained optimization problem in~\eqref{eq:optim_problem}.
Let $\mathcal{C}$ be a general compact convex set with non-empty interior.
The following condition will be at the core of our complexity analysis. 

\begin{definition}[Strong Wolfe primal bound]\label{def:strong_wolfe_primal_gap}
Let $K$ be a compact neighborhood of $X^*$ in $\mathcal{C}$, where $X^*$ is the set of solutions of the constrained optimization problem \eqref{eq:optim_problem}. 
A function $f$ satisfies a $r$-strong Wolfe primal bound on $K$, if and only if there exists $r\geq 1$ and $\mu>0$ such that for all $x\in K$
\BEQ\label{eq:strong_wolfe_primal_gap}
f(x) - f^* \leq \mu w(x)^{r},
\EEQ
and $f^*$ its optimal value.
\end{definition}

In the next section, provided $f$ is a smooth convex function, we will show, for instance, that $r=2$ above guarantees linear convergence of our variant of Away Frank-Wolfe. This $2$-strong Wolfe primal bound holds notably when $f$ is strongly convex over a polytope, which corresponds to the  linear convergence bound in~\citep{FW-converge2015}, hence the following observation.

\begin{observation}[$f$ strongly convex and $\mathcal C$ a polytope]\label{obs:strong_convexity_r_2}
The results in \cite[Theorem 8 in Eq (28)]{FW-converge2015} show that when $f$ is strongly convex and $\mathcal{C}$ is a polytope then there exists $\mu_f^A>0$ such that for all $x\in\mathcal{C}$
\[
f(x)-f^*\leq \frac{w(x)^2}{2\mu_f^A}.
\]
In other words, condition~\eqref{def:strong_wolfe_primal_gap} holds with $r=2$ in this case.
\end{observation}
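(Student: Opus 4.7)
The plan is to import the key inequality from \citep[Theorem~6, Eq.~(28)]{FW-converge2015} and match its statement to Definition~\ref{def:strong_wolfe_gap}. Specifically, as an intermediate step in their linear-convergence analysis of the Away-step Frank-Wolfe algorithm, Lacoste-Julien and Jaggi prove a \emph{geometric strong convexity} lemma which, when $f$ is $\mu$-strongly convex and $\mathcal{C}$ is a polytope, yields that for any $x\in\mathcal{C}$ and any proper support $S\in\mathcal{S}_x$,
\[
f(x) - f^* \;\leq\; \frac{w(x,S)^2}{2\mu\,\mathrm{PW}(\mathcal{C})^2},
\]
where $\mathrm{PW}(\mathcal{C})$ denotes the pyramidal width of $\mathcal{C}$. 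Two quick checks justify this translation: (i) the strong Wolfe gap used in their argument decomposes into an away dual gap plus a Frank-Wolfe dual gap, which is exactly our Eq.~\eqref{eq:decomp_strong_gap}, so their quantity coincides with $w(x,S)$ in Definition~\ref{def:strong_wolfe_gap}; and (ii) their geometric decomposition handles every proper support of $x$, not merely the one maintained by the algorithm.

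Granted the inequality uniformly in $S\in\mathcal{S}_x$, the second step is simply to minimize its right-hand side over proper supports. By Definition~\ref{def:strong_wolfe_gap}, $\min_{S\in\mathcal{S}_x} w(x,S)^2 = w(x)^2$, so setting $\mu_f^A \triangleq \mu\,\mathrm{PW}(\mathcal{C})^2 > 0$ yields
\[
f(x) - f^* \;\leq\; \frac{w(x)^2}{2\mu_f^A} \qquad \text{for all } x\in\mathcal{C},
\]
which is precisely Definition~\ref{def:strong_wolfe_primal_gap} with $K=\mathcal{C}$ and exponent $r=2$, proving the claimed observation.

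The only place that genuinely requires care is check (i) above, i.e., the identification of the object appearing on the right-hand side of \citep[Eq.~(28)]{FW-converge2015} with our $w(x,S)$; once Eq.~\eqref{eq:decomp_strong_gap} together with the discussion of proper supports in Lemma~\ref{lem:zero-gap} is in hand, the identification is immediate. No analytic obstacle remains: the observation is a repackaging of an already established bound into the language of $r$-strong Wolfe primal bounds, and taking the minimum over supports is what lets us state it intrinsically in terms of $w(x)$ rather than an algorithm-dependent $w(x,S)$.
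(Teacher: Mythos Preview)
Your proposal is correct and matches the paper's treatment: the paper gives no proof for this observation at all, simply citing \cite[Theorem~6, Eq.~(28)]{FW-converge2015} as the source of the bound. Your write-up just unpacks that citation (identifying their pairwise gap with $w(x,S)$ via~\eqref{eq:decomp_strong_gap} and then minimizing over proper supports to pass to $w(x)$), which is exactly the intended reading.
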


The fact that $w(x)=0$ if and only if $f(x)=f^*$ means that, in principle, the {\L}ojasiewicz factorization lemma \cite[\S 3.2.]{bolte2007lojasiewicz} could be used to show that condition~\eqref{eq:strong_wolfe_primal_gap} holds generically but with unobservable parameters. These parameters are inherently hard to infer because \eqref{eq:strong_wolfe_primal_gap} combines the properties of $f$ and $\mathcal{C}$, not distinguishing between the contribution of the function from that of the structure of the constrained set.
This was our initial approach in \citep{kerdreux2019restarting} but proving the subanalyticity of $w(\cdot)$ is however non-trivial.

Hence, although \eqref{eq:strong_wolfe_primal_gap} has an appealing succinct form, our results will rely on the combination of a more classical H\"{o}lderian error bound (in  Definition~\ref{def:LGC}) defined on $f$, and a \textit{scaling inequality} (defined below in Definition~\ref{def:Scaling}), essentially driven by the structure of the set $\mathcal{C}$. 
The combination of these two inequalities leads to a $r$-strong Wolfe primal bound. 
We first state the \textit{scaling inequality} relative to the strong Wolfe gap \(w(x)\) that we will use in the context of the the away step variant of the Frank-Wolfe algorithm.

\begin{definition}[$\delta$-scaling]\label{def:Scaling}
A convex set $\mathcal{C}$ satisfies a \emph{scaling inequality} if there exists $\delta(\mathcal{C})>0$ such that for all $x\in\mathcal{C}\setminus X^*$ and all differentiable convex function $f$,
\BEQ\label{eq:Scaling}\tag{Scaling}
w(x) \geq \delta(\mathcal{C}) ~ \max_{x^*\in X^*} \left \langle \nabla f(x), \, \frac{x - x^*}{\|x - x^*\|} \right \rangle.
\EEQ
\end{definition}

Here again, the strong Wolfe gap $w(x)$ is the minimum over all proper supports of $x$ of the scalar product of the (negative) gradient with the pairwise direction formed by the difference of the Frank-Wolfe vertex and the away vertex. Hence the $\delta$-scaling inequality compares the worst pairwise FW direction with the normalization of the direction $x^* - x$. Notably this condition is known to hold when $\mathcal{C}$ is a polytope, with \cite{FW-converge2015} showing the following result (see also \citep{pena2018polytope} for a simpler variant).

\begin{lemma}[\citep{FW-converge2015}]\label{lem:Scaling_polytope}
A polytope satisfies the $\delta$-scaling inequality with $\delta(\mathcal{C}) = PWidth(\mathcal{C})$, where $PWidth(\mathcal{C})$ is the pyramidal width \citep[(9)]{FW-converge2015}.
\end{lemma}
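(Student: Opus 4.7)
My plan is to translate the strong Wolfe gap $w(x)$ from Definition~\ref{def:strong_wolfe_gap} into the pyramidal directional width language of \cite{FW-converge2015} and then invoke their core geometric argument for the pyramidal width.

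First, I would unfold the definition: setting $r = -\nabla f(x)$ and using that a linear form over a polytope attains its maximum at a vertex,
\[
w(x) \;=\; \min_{S \in \mathcal{S}_x}\; \max_{y \in S,\, z \in \mathcal{C}} \langle r, z - y\rangle \;=\; \|r\|\; \min_{S \in \mathcal{S}_x}\; \max_{s \in \Ext(\mathcal{C}),\, v \in S} \left\langle \frac{r}{\|r\|},\, s-v \right\rangle.
\]
The right-hand factor is exactly the pyramidal directional width $\operatorname{PdirW}(\Ext(\mathcal{C}), r, x)$ introduced in \cite{FW-converge2015}, so $w(x) = \|\nabla f(x)\|\cdot \operatorname{PdirW}(\Ext(\mathcal{C}), -\nabla f(x), x)$.

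Second, fix any $x^* \in X^*$ with $x\neq x^*$ and set $e = x^*-x$, a feasible direction at $x$. The key geometric inequality extracted from the proof of Theorem~6 in \cite{FW-converge2015} states that for any $r \in \reals^n$ and any $e \in \text{cone}(\mathcal{C}-x)\setminus\{0\}$,
\[
\operatorname{PdirW}(\Ext(\mathcal{C}), r, x) \;\geq\; \operatorname{PWidth}(\mathcal{C})\cdot \frac{\langle r, e\rangle}{\|r\|\,\|e\|}.
\]
This is obtained by restricting to the minimal face $K$ of $\mathcal{C}$ containing $x$ (so that $e \in \text{cone}(K-x)$), decomposing $r$ along $K$ and its orthogonal complement, and observing that the orthogonal component does not affect $\langle r, s-v\rangle$ for $s,v \in K$, while the parallel component lies in the feasibility cone and so can be bounded below using the definition of pyramidal width. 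Multiplying by $\|r\|=\|\nabla f(x)\|$ and substituting $r=-\nabla f(x)$, $e=x^*-x$ yields
\[
w(x) \;\geq\; \operatorname{PWidth}(\mathcal{C})\cdot \left\langle \nabla f(x),\, \frac{x-x^*}{\|x-x^*\|}\right\rangle,
\]
and taking the maximum over $x^* \in X^*$ gives the scaling inequality with $\delta(\mathcal{C})=\operatorname{PWidth}(\mathcal{C})$.

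The main obstacle is the second step: controlling $\operatorname{PdirW}$ in the non-feasible direction $-\nabla f(x)$ by $\operatorname{PWidth}$ times a cosine factor involving the descent direction $e$. The delicate point is that the support $S$ may not span the minimal face containing $x$, which forces one to work carefully with the face structure and the extreme points $\Ext(\mathcal{C})\cap K$. This is precisely the content of Lemmas~3--5 and Theorem~6 in \cite{FW-converge2015} (with a streamlined presentation in \citep{gutman2018condition,pena2018polytope}), so rather than reproving this geometric inequality I would cite it directly and only verify the bookkeeping that turns it into the form of~\eqref{eq:Scaling}.
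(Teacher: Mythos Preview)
Your proposal is correct and follows essentially the same route as the paper: the paper does not give an independent proof of this lemma but simply attributes it to \cite{FW-converge2015} (with the later simplifications in \citep{gutman2018condition,pena2018polytope}). Your write-up makes explicit the dictionary between the strong Wolfe gap $w(x)$ of Definition~\ref{def:strong_wolfe_gap} and the pyramidal directional width of \cite{FW-converge2015}, and then defers the actual geometric inequality to Theorem~6 and its supporting lemmas there, exactly as the paper does.
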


We now recall the definition of the H\"{o}lderian error bound (aka sharpness) for a function $f$ on problem \eqref{eq:optim_problem} \citep{hoffman1952approximate,law1965ensembles,lojasiewicz1993geometrie,bolte2007lojasiewicz} (see e.g., \citep{roulet2020sharpness} for more detailed references).

\begin{definition}[H\"{o}lderian error bound (HEB)]\label{def:LGC}
Consider a convex function $f$ and $K$ a compact neighborhood of $X^*$ in $\mathcal{C}$. For optimization problem \eqref{eq:optim_problem}, $f$ satisfies a $(\theta,c)$-\ref{eq:LGC} on $K$ if there exists $\theta\in [0,1/2]$ and $c>0$ such that for all $x\in K$
\BEQ\label{eq:LGC}\tag{HEB}
\min_{x^*\in X^*} \|x - x^*\| \leq c (f(x) - f^*)^\theta.
\EEQ
\end{definition}

The H\"{o}lderian error bound~\eqref{eq:LGC} locally quantifies the behavior of $f$ around the constrained optimum of problem~\eqref{eq:prob}. A similar condition was used to show improved convergence rates for unconstrained optimization in e.g., \citep{Nemi85,attouch2014variational,frankel2015splitting,karimi2016linear,bolte2017error,roulet2020sharpness,li2018calculus}.
Note that strong convexity implies $(\theta, c)$-\ref{eq:LGC} with $\theta = 1/2$ so~\eqref{eq:LGC} can be seen as a generalization of strong convexity. Here, $\theta$ will allow us to interpolate between sub-linear and linear convergence rates.

Finally, we show that when Problem \eqref{eq:optim_problem} satisfies both $\delta$-\ref{eq:Scaling} and $(\theta,c)$-\ref{eq:LGC}, the $(1-\theta)^{-1}$-strong Wolfe primal bound in~\eqref{eq:strong_wolfe_primal_gap} holds.

\begin{lemma}\label{lem:strong_wolfe_primal_gap}
Assume $f$ is a differentiable convex function satisfying $(\theta,c)$-\ref{eq:LGC} on $K$, and that $\mathcal{C}$ satisfies $\delta$-\ref{eq:Scaling} inequality. Then for all $x\in K$
\[
f(x) - f^* \leq \Big(\frac{c}{\delta}\Big)^{r} w(x)^{r},
\]
with $r=\frac{1}{1-\theta}$ and $f^*$ the objective value at constrained optima.
\end{lemma}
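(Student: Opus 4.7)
The plan is to chain the two hypotheses through the first-order convexity inequality. The argument is essentially one line once the right point $x^*$ is selected in each factor.

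First, I would dispose of the trivial case $x \in X^*$, where both sides of the target inequality vanish. For the generic case $x \in K \setminus X^*$, let $\bar x \in \arg\min_{x^* \in X^*}\|x - x^*\|$, so that HEB gives $\|x - \bar x\| \leq c(f(x) - f^*)^{\theta}$. By convexity of $f$,
\[
f(x) - f^* \;\leq\; \langle \nabla f(x), x - \bar x\rangle \;=\; \|x - \bar x\|\,\Big\langle \nabla f(x), \tfrac{x - \bar x}{\|x - \bar x\|}\Big\rangle.
\]
The inner product is at most $\max_{x^* \in X^*} \langle \nabla f(x), (x - x^*)/\|x - x^*\|\rangle$, which by the $\delta$-scaling inequality is at most $w(x)/\delta$. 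Combining these two bounds gives
\[
f(x) - f^* \;\leq\; \frac{c}{\delta}\,(f(x) - f^*)^{\theta}\,w(x).
\]

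From here I would simply isolate $f(x) - f^*$: divide both sides by $(f(x) - f^*)^{\theta}$ (which is strictly positive in the generic case) to obtain $(f(x) - f^*)^{1 - \theta} \leq (c/\delta)\,w(x)$, and then raise to the power $r = 1/(1 - \theta)$, yielding
\[
f(x) - f^* \;\leq\; \Big(\tfrac{c}{\delta}\Big)^{r} w(x)^{r},
\]
as claimed. A small comment should address that when $\theta = 1$ the exponent $r$ is formally infinite, so the useful regime is $\theta \in [0,1)$.

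There is no real obstacle here; the only subtle point is that HEB and scaling naturally involve different choices of $x^*$ (the closest optimum for HEB, the argmax for scaling), and one must therefore route through the maximum on the scaling side rather than trying to enforce a single common $x^*$. Once that is noticed, the proof reduces to the two displayed inequalities above.
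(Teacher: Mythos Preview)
Your proof is correct and follows essentially the same approach as the paper: convexity plus the closest optimum $\bar x$, then HEB on the distance factor, pass to the $\max$ over $X^*$, apply $\delta$-scaling, and finally isolate $(f(x)-f^*)^{1-\theta}$. The only cosmetic difference is that the paper isolates first and applies scaling afterward, whereas you apply scaling first and then isolate; the two orderings are interchangeable.
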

\begin{proof}
Assume we have $(\theta,c)$-\ref{eq:LGC} on $K$. For $x\in K\setminus X^*$, by convexity, with  $\tilde{x}\in \argmin_{x^*\in X^*}{\|x - x^*\|}$
\[
f(x) - f^*  \leq  \frac{\langle \nabla f(x), \, x - \tilde{x} \rangle}{\|x - \tilde{x}\|} \|x - \tilde{x}\|.
\]
Hence applying $(\theta,c)$-\ref{eq:LGC} leads to
\begin{eqnarray}
f(x) - f^*  &\leq &  c \frac{\langle \nabla f(x), \, x - \tilde{x} \rangle}{\|x - \tilde{x}\|} \Big(f(x) - f^*\Big)^\theta\nonumber\\
 &\leq &   c  \max_{x^*\in X^*} \frac{\langle \nabla f(x), \, x - x^* \rangle}{\|x - x^*\|} \Big(f(x) - f^*\Big)^\theta,\label{eq:identity_LGC}
\end{eqnarray}
from which we obtain
\[
f(x) - f^* \leq c^{\frac{1}{1-\theta}} \max_{x^*\in X^*} \Bigg(  \frac{\langle \nabla f(x), \, x - x^* \rangle }{\|x - x^*\|}   \Bigg)^{\frac{1}{1-\theta}}.
\]
Combining this with the $\delta$-scaling inequality, we have
\[
f(x) - f^* \leq \Big(\frac{c}{\delta}\Big)^{\frac{1}{1-\theta}} w(x)^{\frac{1}{1-\theta}},
\]
and the desired result.
\end{proof}

In the next section, varying values of $r\in[1,2[$ in \eqref{eq:strong_wolfe_primal_gap} allow to produce sub-linear complexity bounds of the form $\mathcal{O}(1/\epsilon^{1/(2-r)})$, continuously interpolating between the known sub-linear $\mathcal{O}(1/\epsilon)$ and a linear convergence rate obtained with $r=2$. 
For simplicity of exposition, we will always pick $K=\mathcal{C}$ in what follows. We also write $\textbf{Int}(\cdot)$ for the interior of a set.

\section{The Fractional Away-Step Frank-Wolfe Algorithm}\label{sec:sharpAWF}
In this section, we focus on the case where $\mathcal{C}$ is a polytope and $f$ a smooth convex function. This means, in particular, that condition~\eqref{eq:Scaling} holds.
We now state the Fractional Away-step Frank-Wolfe method as Algorithm~\ref{alg:frac-AFW}, a variant of the Away-step Frank-Wolfe algorithm, tailored for restarting.

\begin{algorithm}
  \caption{Fractional Away-step Frank-Wolfe Algorithm}
  \label{alg:frac-AFW}
  \begin{algorithmic}[1]
    \REQUIRE
      A smooth convex function $f$ with curvature~\(C_f^{A}\).
      Starting point \(x_{0} = \sum_{v\in \mathcal{S}_0}{\alpha^v_{0} v} \in \mathcal{C}\) with support $\mathcal{S}_0\subset\Ext(\mathcal{C})$.
      LP oracle \eqref{eq:lmo} and schedule parameter $\gamma > 0$.
    \STATE $t := 0$
    \WHILE{$w(x_t,\mathcal{S}_t) > e^{-\gamma} w(x_0,\mathcal{S}_0)$}
      \STATE
        \(v_t := \LP{\mathcal{C}}(\nabla f(x_{t})) \text{ and } d_t^{FW} := v_t - x_t\) \label{line:LMO_FW}
\STATE \(s_t := \LP{\mathcal{S}_t}(-\nabla f(x_{t}))\) with
        \(\mathcal{S}_t\) current active set and $d_t^{Away} := x_t - s_t$
        
        \IF{\(-\nabla f(x_t)^T d_t^{FW} >  e^{-\gamma} w(x_0,\mathcal{S}_0) / 2\)}\label{line:test_criterion}
        \STATE $d_t := d_t^{FW}$ with $\eta_{\text{max}} := 1$
        \ELSE
        \STATE $d_t := d_t^{Away}$ with $\eta_{\text{max}} := \frac{\alpha_t^{s_t}}{1-\alpha_t^{s_t}}$ \label{line:away_direction}
        \ENDIF
        \STATE $x_{t+1} := x_t + \eta_t d_t$ with $\eta_t\in [0,\eta_{\text{max}}]$ via line-search
        \STATE Update active set $\mathcal{S}_{t+1}$ and coefficients $\{\alpha_{t+1}^v\}_{v\in\mathcal{S}_{t+1}}$ \label{line:update_alpha_s}
        \STATE $t := t+1$
    \ENDWHILE
    \ENSURE $t\in\mathbb{N}$ and \(x_{t} \in \mathcal{C}\) such that $w(x_t,\mathcal{S}_t) \leq e^{-\gamma} w(x_0,\mathcal{S}_0)$
  \end{algorithmic}
\end{algorithm}

For the Away-step Frank-Wolfe or Algorithm \ref{alg:frac-AFW}, an iteration performs a \textit{drop step} when the update direction is the away direction (Line \ref{line:away_direction}) and the chosen step size $\eta_t$ is equal to $\eta_{\text{max}}={\alpha_{s_t}}/{(1-\alpha_{s_t})}$. Indeed, such an iteration removes (drops) the away vertex $s_t$ from the convex combination of $x_t$. Conversely, we will call a step a \emph{full-progress step} if it is a Frank-Wolfe step or an away step that is not a drop step.
The support $\mathcal{S}_t$ and the weights $\alpha_t$ are updated exactly as in \citep[\text{Away-step Frank-Wolfe}]{FW-converge2015}. 
Note that to perform such away versions of Frank-Wolfe, we require a linear minimization oracle over the support of the optimization iterates.
Such an oracle is typically done naively so that its cost grows linearly with the size of the support.
Algorithm \ref{alg:frac-AFW} depends on a parameter $\gamma>0$ which explicitly controls the number of iterations needed for the algorithm to stop. 
In particular, a large value of $\gamma$ will increase the number of iterations and when $\gamma$ converges to infinity, Algorithm \ref{alg:frac-AFW} tends to behave exactly like the classical Frank-Wolfe and never chooses the away direction as an update direction.
To support this intuition, we prove Appendix~\ref{sec:oneShot} that the convergence rate of one run of Fractional Away Frank-Wolfe with a large value of $\gamma$ similar to that of the classical Frank-Wolfe.

We name Algorithm \ref{alg:frac-AFW} a \textit{Fractional} version of Away Frank-Wolfe since after running the algorithm, the strong Wolfe gap $w(x_t,\mathcal{S}_t)$ (our measure of optimality) is only guaranteed to be a fraction of the initial Wolfe gap $w(x_0,\mathcal{S}_0)$.
Besides, the vanilla AFW consists in a different decision rule to decide between away-steps or FW steps (Line \ref{line:test_criterion}).

Proposition \ref{prop:max_iter_fracFW} below gives an upper bound on
the number of iterations required for Algorithm \ref{alg:frac-AFW} to
reach a given target gap $w(x_T, \mathcal{S}_T) \leq w(x_0,\mathcal{S}_0) e^{-\gamma}$. The assumption $e^{-\gamma} w(x_0,\mathcal{S}_0) / 2 \leq C_f^{A}$ in this
proposition measures the complexity of a burn-in phase whose cost is marginal as shown in Proposition~\ref{obs:burn_in_phase}.

\begin{proposition}[Fractional Away-Step Frank-Wolfe Complexity]
\label{prop:max_iter_fracFW}
Let $f$ be a smooth convex function with away curvature $C_f^{A}$ such that the $r$-strong Wolfe primal bound in~\eqref{eq:strong_wolfe_primal_gap} holds on $\mathcal{C}$ (with $1\leq r\leq 2$ and $\mu>0$). Let $\gamma > 0$ and assume $x_0\in \mathcal{C}$ is such that $e^{-\gamma} w(x_0) / 2 \leq C_f^{A}$. Algorithm~\ref{alg:frac-AFW} outputs an iterate $x_T\in \mathcal{C}$ such that
\[
  w(x_T,\mathcal{S}_T) \leq w(x_0, \mathcal{S}_0) e^{-\gamma}
\]
after at most
\[
  T \leq |\mathcal{S}_0| - |\mathcal{S}_T| + 16 e^{2 \gamma} C_f^{A} \mu w(x_0, \mathcal{S}_0)^{r-2}
\]
iterations, where $\mathcal{S}_0$ and $\mathcal{S}_T$ are respectively the supports of $x_0$ and $x_T$.
\end{proposition}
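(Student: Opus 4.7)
The plan is to adapt the classical away-step Frank-Wolfe complexity analysis of \citep{FW-converge2015}, substituting the $r$-strong Wolfe primal bound for strong convexity. I would partition the inner-loop iterations into Frank-Wolfe steps, full-progress away steps, and drop steps, and combine three ingredients: a uniform lower bound on the per-iteration directional derivative $-\nabla f(x_t)^T d_t$, a per-iteration primal progress bound for non-drop steps, and a support-size accounting that controls drop steps.

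The first ingredient follows from Line~\ref{line:test_criterion}. While the loop is active, the decomposition \eqref{eq:decomp_strong_gap} gives $w(x_t, \mathcal{S}_t) = g(x_t) + h(x_t) > e^{-\gamma} w(x_0, \mathcal{S}_0)$, so at least one of the two gaps exceeds $\delta := e^{-\gamma} w(x_0, \mathcal{S}_0)/2$: if the Frank-Wolfe branch is chosen this is exactly the test, and otherwise $g(x_t) \leq \delta$ forces $h(x_t) > \delta$. In either case $-\nabla f(x_t)^T d_t \geq \delta$.

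The second ingredient applies the away-curvature inequality $f(x_{t+1}) \leq f(x_t) + \eta_t \nabla f(x_t)^T d_t + (\eta_t^2/2) C_f^A$ together with line search on $[0, \eta_{\max}]$. For non-drop steps one has $\eta_{\max} \geq 1$ (FW) or the line search is interior (full-progress away), and the burn-in hypothesis $\delta \leq C_f^A$ places the quadratic minimizer $-\nabla f(x_t)^T d_t / C_f^A$ inside $[0,1] \subseteq [0, \eta_{\max}]$, so every such step makes primal progress at least $\delta^2/(2 C_f^A) = e^{-2\gamma} w(x_0, \mathcal{S}_0)^2/(8 C_f^A)$. Telescoping over the $N_{\text{full}}$ non-drop steps and using $f(x_0) - f^* \leq \mu w(x_0)^r \leq \mu w(x_0, \mathcal{S}_0)^r$ from the $r$-strong Wolfe primal bound yields $N_{\text{full}} \leq 8 e^{2\gamma} C_f^A \mu w(x_0, \mathcal{S}_0)^{r-2}$.

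The third ingredient is the standard counting argument on $|\mathcal{S}_t|$: Frank-Wolfe steps raise the active set by at most one, full-progress away steps leave it unchanged, and drop steps lower it by exactly one, so $|\mathcal{S}_T| - |\mathcal{S}_0| \leq N_{\text{FW}} - N_{\text{drop}}$, hence $N_{\text{drop}} \leq |\mathcal{S}_0| - |\mathcal{S}_T| + N_{\text{FW}} \leq |\mathcal{S}_0| - |\mathcal{S}_T| + N_{\text{full}}$. Summing, $T = N_{\text{full}} + N_{\text{drop}} \leq 2 N_{\text{full}} + |\mathcal{S}_0| - |\mathcal{S}_T|$, producing the factor $16 = 2 \times 8$ in the claimed bound. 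The most delicate piece will be making the per-step progress bound in the second ingredient compatible with the proposition's burn-in hypothesis as stated (with $w(x_0)$ in place of $w(x_0, \mathcal{S}_0)$): either one verifies that the progress bound still holds in the large-$\delta$ regime via $\delta/2 \geq \delta^2/(2 C_f^A)$ when $\delta > C_f^A$, or one works with the slightly stronger form $e^{-\gamma} w(x_0, \mathcal{S}_0)/2 \leq C_f^A$ used in the surrounding text.
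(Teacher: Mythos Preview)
Your proposal is correct and follows the same line of argument as the paper's proof: the same directional-derivative lower bound from Line~\ref{line:test_criterion} and \eqref{eq:decomp_strong_gap}, the same per-step progress $\delta^2/(2C_f^A)$ for non-drop steps yielding $N_{\text{full}}\le 8 e^{2\gamma} C_f^A \mu w(x_0,\mathcal{S}_0)^{r-2}$, and the same active-set counting to bound drop steps, combining to the factor $16$. Your remark on the discrepancy between $w(x_0)$ in the hypothesis and $w(x_0,\mathcal{S}_0)$ in the argument is well taken; the paper silently works with the latter.
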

\begin{proof}
Let us write $w_0\triangleq w(x_0,\mathcal{S}_0)$ to simplify notation.
Because of the test criterion in Line \ref{line:test_criterion}, one can lower bound the inner product between the update directions $d_t$ and the negative gradients $ -\nabla f(x_t)$ of the form
\begin{equation}\label{eq:direction_minimal_improvement}
-\nabla f(x_t)^T d_t > e^{-\gamma} w(x_0, \mathcal{S}_0)/2~.
\end{equation}
Indeed, this holds by definition when $d_t = d_t^{FW}$.
Otherwise, $d_t=d_t^{Away}$ and $-\nabla f(x_t)^T d_t^{FW} < e^{-\gamma} w_0/2$. 
Also because the algorithm has not terminated yet, we have $w(x_t, \mathcal{S}_t) > e^{-\gamma} w_0$. 
The decomposition of the strong Wolfe gap \eqref{eq:decomp_strong_gap} then yields
\[
w(x_t,\mathcal{S}_t) = -\nabla f(x_t) ^T d_t^{FW}  - \nabla f(x_t)^Td_t^{Away}  >  e^{-\gamma} w_0,
\]
so that we indeed obtain \eqref{eq:direction_minimal_improvement}
\[
 -\nabla f(x_t)^Td_t^{Away}  >  e^{-\gamma} w_0 +  \nabla f(x_t)^T d_t^{FW} \geq e^{-\gamma} w_0 - e^{-\gamma} w_0/2 = e^{-\gamma} w_0/2.
\]
Using curvature in~\eqref{eq:away_curvature}, we have for $d_t$,
\[
f(x_t +\eta d_t) \leq  f(x_t) + \eta \nabla f(x_t)^T d_t + \frac{\eta^2}{2}C_f^{A}~,
\]
which implies
\[
f(x_t) - f(x_t +\eta d_t) \geq  \eta -\nabla f(x_t)^Td_t -  \frac{\eta^2}{2}C_f^{A}~.
\]
We can lower bound progress $f(x_t) - f(x_{t+1})$ with $x_{t+1} = x_t +\eta d_t$ at each iteration for full-progress steps. 
Indeed, for Frank-Wolfe steps,
\BEAS
f(x_{t}) - f(x_{t+1}) &\geq & \underset{\eta\in[0,1]}{\text{max }} \Big\{\eta (-\nabla f(x_t))^Td_t  -  \frac{\eta^2}{2}C_f^{A} \Big\} \\
&\geq & \underset{\eta\in[0,1]}{\text{max }} \Big\{ \eta e^{-\gamma}w_0/2 -  \frac{\eta^2}{2}C_f^{A} \Big\}
\EEAS
Hence because of exact line-search, assuming $e^{-\gamma}w_0/2 \leq C_f^{A}$ holds, we obtain
\BEQ\label{eq:curvature_lower_bound}
f(x_{t}) - f(x_{t+1})  \geq  \frac{ w_0^2}{8 C_f^{A} e^{2\gamma}}.
\EEQ
For all away steps, we have
\[
f(x_t) - f(x_t + \eta d_t) \geq \underset{\eta\in[0,\eta_{\text{max}}]}{\text{max }} \Big\{\eta e^{-\gamma}w_0/2 -  \frac{\eta^2}{2}C_f^{A} \Big\}.
\]
Yet for Away steps that are not drop steps, assuming $e^{-\gamma}w_0/2 \leq C_f^{A}$ holds, the minimal $\eta^*$ is such that $0 < \eta^* < \eta_{\text{max}}$, and the same conclusion as in \eqref{eq:curvature_lower_bound} for Frank-Wolfe steps follows.

Write $T = T_d + T_f$ the number of iterations for Algorithm \ref{alg:frac-AFW} to finish, where $T_d$ denotes the number of drop steps, while $T_f$ stands for the number of full-progress steps. Hence we have,
\BEAS
f(x_0) - f(x_T) &=& \sum_{t=0}^{T-1}{f(x_t) - f(x_{t+1})}\\
& \geq & T_f \frac{w_0^2}{8 C_f^{A} e^{2\gamma}}.
\EEAS
Because $f$ satisfies a $r$-strong Wolfe primal gap on $\mathcal{C}$ we have, when $x_0\in\mathcal{C}$,
\begin{equation}\label{eq:use_of_wolfe_bounds}
f(x_0) - f(x_T) \leq f(x_0) - f^* \leq \mu w(x_0)^{r} \leq \mu w(x_0, \mathcal{S}_0)^r,
\end{equation}
by definition of $w(x)$. 
We then get an upper bound on the number $T_f$ of full-progress steps
\[
T_f \leq 8 C_f^{A} e^{2\gamma} \mu w_0^{r-2}.
\]
Finally writing $|\mathcal{S}_0|$ (resp. $|\mathcal{S}_T|$) the size of the support of $x_0$ (resp. $x_T$), and $T_{FW}$ the number of Frank-Wolfe steps which add a new vertex to an iterate of the Fractional Away-step Frank-Wolfe Algorithm. We have that $T_{FW}\leq T_f$ and the size of the support $\mathcal{S}_t$ of $x_t$ satisfies $|\mathcal{S}_0| - T_d + T_{FW} = |\mathcal{S}_T|$ hence
\[
|\mathcal{S}_0| - |\mathcal{S}_T| + T_f \geq T_d,
\]
and we finally obtain
$
T \leq |\mathcal{S}_0| - |\mathcal{S}_T| + 16 C_f^{A} e^{2\gamma} \mu w_0^{r-2}.
$
\end{proof}

The following observation shows that the assumption $e^{-\gamma} w(x_0, \mathcal{S}_0) / 2 \leq C_f^{A}$ in Proposition \ref{prop:max_iter_fracFW} has a marginal impact on complexity.

\begin{proposition}[Burn-in phase]\label{obs:burn_in_phase}
After at most
\[
8 e^{\gamma} \Big\lfloor \frac{1}{\gamma} \ln{\frac{w_0}{2C_f^{A}}}\Big\rfloor + |\mathcal{S}_0|,
\]
cumulative iterations of Algorithm \ref{alg:frac-AFW}, with constant schedule parameter $\gamma > 0$, we obtain a point $x\in\mathcal{C}$ such that $e^{-\gamma} w(x, \mathcal{S}_x) / 2 \leq
C_f^{A}$.
\end{proposition}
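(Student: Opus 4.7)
The plan is to treat the burn-in as a sequence of restarted calls to Algorithm~\ref{alg:frac-AFW}, and to bound the iterations used in each ``epoch'' while the burn-in condition $e^{-\gamma} w(x,\mathcal{S})/2 \leq C_f^{A}$ still fails. By construction each completed epoch shrinks the strong Wolfe gap by a factor $e^{-\gamma}$, so after $K$ epochs starting from $w_0 := w(x_0, \mathcal{S}_0)$ the current gap is at most $w_0 e^{-K\gamma}$, and burn-in ends once this drops below $2 e^{\gamma} C_f^{A}$. Solving $w_0 e^{-K\gamma} \leq 2 e^{\gamma} C_f^{A}$ gives that $K \leq \tfrac{1}{\gamma}\ln\!\big(w_0/(2 C_f^{A})\big)$ epochs suffice.

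The central step is to lower bound the per-iteration progress in $f$ during burn-in, in the regime where Proposition~\ref{prop:max_iter_fracFW} does not apply directly. Fix one epoch with starting gap $w_0^{(k)}$ and assume $e^{-\gamma} w_0^{(k)}/2 > C_f^{A}$. Exactly as in the proof of Proposition~\ref{prop:max_iter_fracFW}, the test criterion in line~\ref{line:test_criterion} together with the decomposition \eqref{eq:decomp_strong_gap} forces $-\nabla f(x_t)^T d_t > e^{-\gamma} w_0^{(k)}/2$ at every iteration. I would then case-split on the unconstrained curvature-quadratic optimum $\eta^* = -\nabla f(x_t)^T d_t / C_f^{A}$: if $\eta^* \leq \eta_{\max}$, the progress is at least $(e^{-\gamma} w_0^{(k)}/2)^2 / (2 C_f^{A})$, which exceeds $e^{-\gamma} w_0^{(k)}/4$ precisely because the burn-in hypothesis gives $C_f^{A} < e^{-\gamma} w_0^{(k)}/2$; otherwise $\eta^* > \eta_{\max}$, and for a Frank-Wolfe step $\eta_{\max}=1$ yields progress $\geq e^{-\gamma} w_0^{(k)}/2 - C_f^{A}/2 \geq e^{-\gamma} w_0^{(k)}/4$, while for an away step this is a drop step and will be counted combinatorially rather than via function decrease.

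Using the primal bound $f(x) - f^* \leq g(x) \leq w(x,\mathcal{S})$ that follows from convexity and \eqref{eq:decomp_strong_gap}, the total $f$-decrease over epoch $k$ is at most $w_0^{(k)}$, so the number of full-progress steps within that epoch is bounded by $w_0^{(k)} / (e^{-\gamma} w_0^{(k)}/4) = 4 e^{\gamma}$, independently of $k$. Summing across $K$ burn-in epochs gives at most $4 K e^{\gamma}$ full-progress steps. The drop steps are bounded by standard support bookkeeping across all epochs (the support $\mathcal{S}_t$ evolves without reset between restarts, each drop removes a vertex, each Frank-Wolfe step adds at most one, and $|\mathcal{S}_t|\geq 0$ throughout), giving total drops $\leq |\mathcal{S}_0| + (\text{total FW steps}) \leq |\mathcal{S}_0| + 4 K e^{\gamma}$. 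Adding both yields cumulative iterations $\leq 8 K e^{\gamma} + |\mathcal{S}_0|$, which on substituting $K = \tfrac{1}{\gamma}\ln\!\big(w_0/(2 C_f^{A})\big)$ is the stated bound.

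The main obstacle is getting the uniform lower bound $e^{-\gamma} w_0^{(k)}/4$ across \emph{both} Frank-Wolfe and interior away steps during burn-in: in both arms of the case split it is the burn-in inequality $C_f^{A} < e^{-\gamma} w_0^{(k)}/2$ itself that prevents the quadratic curvature penalty from absorbing the linear first-order gain, and the drop-step accounting must be kept purely combinatorial (tracked via $|\mathcal{S}_0|$ plus FW additions) so that those steps do not pollute the per-epoch progress count.
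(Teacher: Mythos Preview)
Your proposal is correct and follows essentially the same approach as the paper: bound per-epoch full-progress steps by $4e^{\gamma}$ via the burn-in inequality $C_f^{A} < e^{-\gamma} w_0^{(k)}/2$ to get progress at least $e^{-\gamma} w_0^{(k)}/4$, handle drop steps by telescoping support sizes across epochs, and count at most $\tfrac{1}{\gamma}\ln(w_0/(2C_f^{A}))$ epochs. Your explicit case split on $\eta^* \lessgtr \eta_{\max}$ is in fact cleaner than the paper's terser ``cap to $1$'' argument, which leaves the non-drop away-step case implicit, but the two arguments are otherwise the same.
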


\begin{proof}
The proof closely follows that of Proposition \ref{prop:max_iter_fracFW} as well as \citep[Appendix D]{FW-converge2015}.
Let $w_0= w(x_0, \mathcal{S}_0)$. Suppose that \(e^{-\gamma} w_0 / 2 > C_f^{A}\) and note that the lower bound \eqref{eq:direction_minimal_improvement} holds similarly. Let us consider the progress incurred with \textit{full progress steps}. Recall that the curvature of $f$ with the line-search ensure that for any $\eta\in[0,\eta_{\text{max}}]$, we have
\[
f(x_t) - f(x_{t+1}) \geq \eta (-\nabla f(x_t))^T d_t - \frac{\eta^2}{2} C_f^{A}.
\]
For a Frank-Wolfe step or an away step with $\eta_{\text{max}}\geq 1$, we obtain by choosing $\eta=1$,
\[
f(x_t) - f(x_{t+1}) \geq (-\nabla f(x_t))^T d_t - C_f^{A}/2 \geq e^{-\gamma} w_0/2 - e^{-\gamma} w_0/4 = e^{-\gamma} w_0/4.
\]
The last possibility is that the full progress step is an away step with $\eta_{\text{max}}<1$. Since it is not a drop step, we have $\eta_t<\eta_{\text{max}}$. In particular then $\eta_t$ is a local minimum of the convex function $f(x_t + \gamma d_t)$ and hence $\underset{\gamma\in[0,\gamma_{\text{max}}]}{\text{min }} f(x_t + \gamma d_t) = \underset{\gamma}{\text{min }} f(x_t + \gamma d_t)$. With $\eta=1$, we obtain
\[
f(x_t) - f(x_{t+1}) \geq (-\nabla f(x_t))^T d_t - C_f^{A}/2.
\]
Finally, we conclude that for any full progress step we have 
\[
f(x_t) - f(x_{t+1}) \geq e^{-\gamma}w_0 /4.
\]
Moreover, the strong Wolfe gap is an upper bound on the primal gap, \textit{i.e.} $f(x_0) - f(x^*) \leq w_0$.
Write $T$ the number of iterations Algorithm~\ref{alg:frac-AFW} performs. As in Proposition~\ref{prop:max_iter_fracFW}, note $T_f$ the number of full progress steps. Similarly, we obtain
\[
T_f e^{-\gamma} w_0/4  \leq  \sum_{t=0}^{T-1}{f(x_t) - f(x_{t+1})} =   f(x_0) - f(x_T) \leq f(x_0) - f(x^*) \leq w_0.
\]
Hence
\[
T_f e^{-\gamma} w_0/4 \leq  w_0
\]
and $T_f \leq 4 e^{\gamma}$. Also
\[
T = T_d + T_f \leq 2 T_f + |\mathcal{S}_0| - |\mathcal{S}_T|~,
\]
so that
\[
T  \leq  8 e^{\gamma} + |\mathcal{S}_0| - |\mathcal{S}_T|.
\]
In other words, when $e^{-\gamma} w(x_0)/2 < C_f^{A}$, after at most $8 e^{\gamma} + |\mathcal{S}_0|$ iterations, the Fractional Away-step Frank-Wolfe terminates, with an iterate $x_T$ which strong Wolfe gap is guaranteed to be a fraction of the initial one, \textit{i.e.}, $w(x_T, \mathcal{S}_T)\leq e^{-\gamma} w(x_0, \mathcal{S}_0)$.

Then, consider running the Fractional Away-step Frank-Wolfe $N$ times, initializing each run with the output of the previous run. Write $x_{T_i}$ each output of the $i^{th}$ run of Algorithm \ref{alg:frac-AFW}. After N runs, $x_{T_N}$ satisfies $w(x_{T_N}, \mathcal{S}_{T_N}) \leq e^{-\gamma N} w_0$. Hence, if $N$ satisfies
\[
e^{-\gamma (N+1)} w_0 /2 \leq C_f^A,
\]
then $x_{T_N}$ verifies $e^{-\gamma} w(x_{T_N}, \mathcal{S}_{T_N}) \leq C_f^A$. In particular, it is sufficient to chose $ N = \big\lfloor \frac{1}{\gamma} \ln{w_0/(2C_f^{A})}\big\rfloor$. 

Finally, since the $i^{th}$ run of Fractional Away-step Frank-Wolfe performs at most $8e^{\gamma} + |\mathcal{S}_{T_{i-1}}| - |\mathcal{S}_{T_i}|$ iterations, to ensure the burn-in phase condition, we need at most
\[
\sum_{i=1}^N 8 e^{\gamma} + |\mathcal{S}_{T_{i-1}}| - |\mathcal{S}_{T_i}| \leq 8 e^{\gamma} \Big\lfloor \frac{1}{\gamma} \ln{\frac{w_0}{2C_f^{A}}}\Big\rfloor + |\mathcal{S}_0|~~\text{ iterations.}
\]
\end{proof}

\section{Restart Schemes}\label{sec:restart-schemes}
Consider a point $x_{k-1}$ with strong Wolfe gap $w(x_{k-1},\mathcal{S}_{k-1})$. Algorithm~\ref{alg:frac-AFW} with parameter $\gamma_k>0$, outputs a point $x_k$ and we write
\[
x_{k} \triangleq \mathcal{F}(x_{k-1},w(x_{k-1},\mathcal{S}_{k-1}),\gamma_k).
\]
Following \citep{roulet2020sharpness} we define \textit{scheduled restarts} for Algorithm \ref{alg:frac-AFW} as follows.
\begin{algorithm}[H]
	\caption{Scheduled restarts for Fractional Away-step Frank-Wolfe \label{algo:scheduled_fractional}}
	\begin{algorithmic}
		\STATE{\textbf{Input:} $\tilde{x}_0\in\reals^n$ and a sequence $(\gamma_k)>0$ and $\epsilon > 0$ and $T\in\mathbb{N}$.}
		\STATE $t:=0$
		\WHILE{$w(x_{k-1}, \mathcal{S}_{k-1})> \epsilon $ and $t < T$}
		\STATE
		\vspace*{-0.3cm}
		\[
		\textstyle
		(x_k, \mathcal{S}_k, T_k) := \mathcal{F}(x_{k-1},w(x_{k-1},\mathcal{S}_{k-1}),\gamma_k) \text{ and } t:= t + T_k.
		\]
		\vspace*{-0.5cm}
		\ENDWHILE
		\STATE{\textbf{Output:} $x_k$}
	\end{algorithmic}
\end{algorithm}

\noindent Note that one overall burn-in phase is sufficient to ensure the condition $e^{-\gamma_i}w(x_{i-1},\mathcal{S}_{i-1})/2 \leq C_f^{A}$ at each restart.

Algorithm \ref{algo:scheduled_fractional} is similar to the restart scheme in \citep[Section 4]{roulet2020sharpness} where a termination criterion is available. In this situation, \citep{roulet2020sharpness} show that the convergence rate of restarted gradient methods is robust to a suboptimal choice of restart scheme parameter $\gamma$.
Here, we also show that our restart scheme is adaptive to the unknown parameters in $(\theta, c)$-\ref{eq:LGC}. 

Note that Algorithm \ref{algo:scheduled_fractional} shares a similar structure with the methods in \citep{lan2017conditional,braun2019blended}.
We will see below in Proposition~\ref{prop:robustness_choice_gamma} that tuning $\gamma$ only has a marginal impact on the complexity bound. Note also that when $\theta \in [0,1/2]$, the condition interpolates between the non-strongly convex function $f$ and a strongly convex function scenarios. 
For the sake of clarity, our convergence results depend on a burn-in phase condition on the initial strong Wolfe gap, \textit{i.e.} $e^{-\gamma}w_0/2 \leq C_f^A$. Proposition \ref{obs:burn_in_phase} shows that it is satisfied after an initial linear convergence regime.

\begin{theorem}[Rate for constant restart schemes]\label{th:convergence_restart_schemes_linear}
Let $f$ be a smooth convex function with away curvature $C_f^{A}$.
Assume $\mathcal{C}$ satisfies $\delta$-\ref{eq:Scaling} and $f$ is $(\theta, c)$-\ref{eq:LGC} on $\mathcal{C}$.
Let $\gamma > 0$ and assume $x_0\in \mathcal{C}$ is such that $e^{-\gamma} w(x_0,\mathcal{S}_0) / 2 \leq C_f^{A}$ (see, Proposition \ref{obs:burn_in_phase}).
With $\gamma_k = \gamma$, the output of Algorithm \ref{algo:scheduled_fractional} satisfies ($r= \frac{1}{1-\theta}$)
\begin{equation}\label{eq:cv_rates}
  \left\{
    \begin{split}
    f(x_T) - f^* &\leq w_0\frac{1}{\Big( 1 + \tilde{T} C_{\gamma}^r \Big)^{\frac{1}{2-r}}}~~\text{ when } 1 \leq r < 2\\
    f(x_T) - f^* &\leq  w_0 \exp\left(-\frac{\gamma}{e^{2\gamma}}\frac{\tilde{T}}{16 C_f^{A} \mu }\right) ~~\text{ when } r=2,
    \end{split}
  \right.
\end{equation}
where $T$ is the cumulative number of Linear Minimization Oracle calls (Line \ref{line:LMO_FW} in Algorithm \ref{alg:frac-AFW}) in Algorithm \ref{algo:scheduled_fractional}, with $w_0=w(x_0,\mathcal{S}_0)$, $\tilde{T}\triangleq T - (|\mathcal{S}_0| - |\mathcal{S}_T|)$, and
\BEQ\label{eq:fct_gamma_rate}
C_{\gamma}^r \triangleq \frac{e^{\gamma (2-r)}-1}{ 16 C_f^{A} \mu e^{2\gamma} w(x_0,\mathcal{S}_0)^{r-2}},
\EEQ
with $\mu=\frac{c}{\delta}$.
\end{theorem}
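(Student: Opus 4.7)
The argument layers three ingredients. First, Lemma \ref{lem:strong_wolfe_primal_gap} combines the $(\theta,c)$-\ref{eq:LGC} and $\delta$-\ref{eq:Scaling} hypotheses into the $r$-strong Wolfe primal bound $f(x) - f^* \leq \mu w(x)^r$ on $\mathcal{C}$ with $r = 1/(1-\theta)$ and $\mu = c/\delta$, so Proposition \ref{prop:max_iter_fracFW} applies at every call of Algorithm \ref{alg:frac-AFW}. Writing $w_k := w(x_k, \mathcal{S}_k)$ for the strong Wolfe gap at the end of the $k$-th call, the exit criterion forces $w_k \leq e^{-\gamma} w_{k-1}$, hence $w_k \leq e^{-\gamma k} w_0$. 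In particular $w_k$ is non-increasing, so the burn-in condition $e^{-\gamma} w_{k-1}/2 \leq C_f^A$ persists at every restart. Proposition \ref{prop:max_iter_fracFW} then bounds the iterations $N_k$ of call $k$ by
\[
N_k \leq (|\mathcal{S}_{k-1}| - |\mathcal{S}_k|) + 16\, e^{2\gamma} C_f^A \mu\, w_{k-1}^{r-2},
\]
and summing over the $K$ restarts needed to produce $x_T$ collapses the support-size terms into $|\mathcal{S}_0| - |\mathcal{S}_T|$, leaving $\tilde T \leq 16\, e^{2\gamma} C_f^A \mu \sum_{k=1}^K w_{k-1}^{r-2}$.

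For the linear regime $r = 2$ every summand equals $1$, so $\tilde T \leq 16 K\, e^{2\gamma} C_f^A \mu$ and $K \geq \tilde T /(16\, e^{2\gamma} C_f^A \mu)$. Combining with $w_K \leq e^{-\gamma K} w_0$ and the universal inequality $f(x_T) - f^* \leq w_K$ from Lemma \ref{lem:zero-gap} yields the stated exponential rate; the constant $8$ appearing in \eqref{eq:cv_rates} instead of $16$ comes from the tighter accounting $\tilde T \leq 2 T_f$ already exploited in the proof of Proposition \ref{prop:max_iter_fracFW}.

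The main subtlety appears in the sublinear regime $1 \leq r < 2$: because $r - 2 \leq 0$, the naive upper bound $w_{k-1} \leq e^{-\gamma(k-1)} w_0$ only lower-bounds $w_{k-1}^{r-2}$ and cannot be used to control the sum. My fix is to pass to $g_k := w_k^{-(2-r)}$, turning the multiplicative contraction $w_k \leq e^{-\gamma} w_{k-1}$ (with $2-r>0$) into $g_k \geq e^{\gamma(2-r)} g_{k-1}$, equivalently $g_{k-1} \leq (g_k - g_{k-1})/(e^{\gamma(2-r)} - 1)$. This telescopes to
\[
\sum_{k=1}^K w_{k-1}^{r-2} \leq \frac{w_K^{-(2-r)} - w_0^{-(2-r)}}{e^{\gamma(2-r)} - 1}.
\]
Substituting into the earlier bound on $\tilde T$ and solving for $w_K^{-(2-r)}$ delivers $w_K^{-(2-r)} \geq w_0^{-(2-r)} (1 + \tilde T\, C_\gamma^r)$ with $C_\gamma^r$ as in \eqref{eq:fct_gamma_rate}. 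Inverting this inequality and applying $f(x_T) - f^* \leq w_K$ once more produces the claimed sublinear bound $f(x_T) - f^* \leq w_0 (1 + \tilde T\, C_\gamma^r)^{-1/(2-r)}$.
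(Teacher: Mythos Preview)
Your argument is correct and, on the point you single out, more careful than the paper's. The paper, after invoking Lemma~\ref{lem:strong_wolfe_primal_gap} and Proposition~\ref{prop:max_iter_fracFW} at each restart, jumps directly to
\[
T \;\leq\; |\mathcal{S}_0| - |\mathcal{S}_T| \;+\; 8\, C_f^{A}\mu\, e^{2\gamma}\, w_0^{r-2}\sum_{i=0}^{R-1} e^{-\gamma i(r-2)},
\]
sums this geometric series, and solves for $e^{-\gamma R}$, then combines with $w(x_R,\mathcal S_R)\le e^{-\gamma R}w_0$. It never justifies why the per-call term $w_{i}^{\,r-2}$ can be replaced by $(e^{-\gamma i}w_0)^{r-2}$; as you note, $w_i\leq e^{-\gamma i}w_0$ goes the wrong way for $r<2$. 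Your telescoping in $g_k=w_k^{-(2-r)}$ is a clean fix that works for Algorithm~\ref{algo:scheduled_fractional} exactly as written (passing the actual gap $w_{k-1}$).

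There is, however, a way to read the paper's step that makes it legitimate and explains the difference: re-run the inner analysis of Proposition~\ref{prop:max_iter_fracFW} for call $i$ with the \emph{scheduled} value $\Phi_{i-1}:=e^{-\gamma(i-1)}w_0\geq w_{i-1}$ standing in for the initial gap. The test threshold is then $e^{-\gamma}\Phi_{i-1}/2$, so each full-progress step gains at least $\Phi_{i-1}^2/(8C_f^{A}e^{2\gamma})$, while on the primal side $f(x_{i-1})-f^*\leq \mu\,w(x_{i-1})^r\leq \mu\,\Phi_{i-1}^r$ because $r\geq 1$. Dividing gives $T_{f,i}\leq 8C_f^{A}e^{2\gamma}\mu\,\Phi_{i-1}^{r-2}$ with the inequality now in the right direction, and the geometric sum follows. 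So the paper's route is effectively ``analyze the scheme as if the predetermined target $e^{-\gamma(i-1)}w_0$ were passed,'' whereas yours handles the algorithm with the observed $w_{k-1}$ and pays for it with the telescoping trick. Both land on the same rate; your explanation of the $8$ versus $16$ via ``tighter accounting $\tilde T\leq 2T_f$'' is not quite the source of the discrepancy (that inequality already gives $16$), and the paper's $8$ in the theorem's proof appears to be a minor constant slip inherited into $C_\gamma^r$.
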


\begin{proof}
Denote by $R$ the number of restarts in Algorithm \ref{alg:frac-AFW} for $T$ total iterations.
By design
\begin{equation}\label{eq:last_iterate_restart_scheme}
w(x_R, \mathcal{S}_R) \leq w_0 e^{-\gamma R}.
\end{equation}
Because $f$ is $(\theta, c)$-\ref{eq:LGC} and $\mathcal{C}$ satisfies $\delta$-\ref{eq:Scaling}, via Lemma \ref{lem:strong_wolfe_primal_gap}, $f$ satisfies the $r$-strong Wolfe primal bound \eqref{eq:strong_wolfe_primal_gap} with $r = \frac{1}{1-\theta}$.
Using Proposition \ref{prop:max_iter_fracFW} and repeatedly \eqref{eq:last_iterate_restart_scheme}, the total number $T$ of steps of Algorithms \ref{alg:frac-AFW} is upper-bounded by
\[
T \leq |\mathcal{S}_0| - |\mathcal{S}_T| + 16 C_f^{A} \mu e^{2\gamma} w_0^{r-2}\sum_{i=0}^{R-1}{e^{-\gamma i (r-2)}}~.
\]
Let us now distinct the case and first suppose that $1\leq r<2$.
We have the following upper bound on~$T$,
\[
T \leq |\mathcal{S}_0| - |\mathcal{S}_T| +  16 C_f^{A} \mu e^{2\gamma} w_0^{r-2} \frac{e^{\gamma (2-r)R}-1}{e^{\gamma (2-r)}-1} = |\mathcal{S}_0| - |\mathcal{S}_T| + \frac{e^{\gamma(2-r)R}-1}{C_{\gamma}^r},
\]
hence, with $\tilde{T}= T - |\mathcal{S}_0| + |\mathcal{S}_T|$, 
\[
e^{-\gamma R}\leq \frac{1}{\Big( 1 + \tilde{T} C_{\gamma}^r \Big)^{\frac{1}{2-r}}}.
\]
Thus, for $1 \leq r < 2$,
\[
w(x_R,\mathcal{S}_R) \leq w_0\frac{1}{\Big( 1 + \tilde{T} C_{\gamma}^r \Big)^{\frac{1}{2-r}}}.
\]
Now, the remaining case $r=2$ leads to
\[
T \leq  |\mathcal{S}_0| - |\mathcal{S}_T| + 16 C_f^{A} \mu e^{2\gamma} R,
\]
and hence
\[
w(x_R, \mathcal{S}_R) \leq  w_0 \exp\left(-\gamma\frac{\tilde{T}}{16 C_f^{A} \mu e^{2\gamma}}\right),
\]
which yields the desired result.
\end{proof}

\begin{corollary}
When $\mathcal{C}$ is a polytope and $f$ a smooth convex function satisfying $(\theta, c)$-\ref{eq:LGC}, rates in Theorem \ref{th:convergence_restart_schemes_linear} hold. In particular when $f$ is strongly convex, $\theta = \frac{1}{2}$ (and hence $r=2$) and Algorithm \ref{algo:scheduled_fractional} converges linearly. When $f$ is simply smooth, $\theta = 0$ (and hence $r=1$) and Algorithm \ref{algo:scheduled_fractional} converges sub-linearly with a rate of $\mathcal{O}(1/T)$.
\end{corollary}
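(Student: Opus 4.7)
The plan is to obtain the corollary as a direct specialization of Theorem~\ref{th:convergence_restart_schemes_linear}, using Lemma~\ref{lem:Scaling_polytope} to supply the scaling hypothesis automatically whenever $\mathcal{C}$ is a polytope. First I would invoke Lemma~\ref{lem:Scaling_polytope} to conclude that any polytope $\mathcal{C}$ satisfies $\delta$-\ref{eq:Scaling} with $\delta(\mathcal{C}) = PWidth(\mathcal{C}) > 0$. Combined with the standing assumption that $f$ is smooth and convex and satisfies $(\theta,c)$-\ref{eq:LGC}, this means that the full hypothesis set of Theorem~\ref{th:convergence_restart_schemes_linear} is in place, so the two rate bounds in \eqref{eq:cv_rates} apply verbatim with $r = \tfrac{1}{1-\theta}$ and $\mu = c/\delta$.

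Next I would verify the two special cases. For the strongly convex case, I would recall that $\sigma$-strong convexity of $f$ together with first-order optimality implies $\tfrac{\sigma}{2}\|x-x^*\|^2 \leq f(x)-f^*$, i.e.\ $\|x-x^*\| \leq \sqrt{2/\sigma}\,(f(x)-f^*)^{1/2}$, which is precisely $(\theta,c)$-\ref{eq:LGC} with $\theta = 1/2$. Plugging $\theta = 1/2$ gives $r = 2$, and the second branch of \eqref{eq:cv_rates} produces the claimed linear rate $w_0 \exp(-\gamma \tilde{T}/(8 e^{2\gamma} C_f^A \mu))$.

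For the merely smooth convex case, the only point requiring a word of comment is why $\theta = 0$ is admissible: the definition \eqref{eq:LGC} with $\theta = 0$ reads $\min_{x^*\in X^*}\|x-x^*\| \leq c$, which holds on the compact neighborhood $K = \mathcal{C}$ by taking $c$ equal to the diameter of $\mathcal{C}$. Hence every smooth convex $f$ satisfies $(0,\mathrm{diam}(\mathcal{C}))$-\ref{eq:LGC} on $\mathcal{C}$. With $\theta = 0$ one has $r = 1$, and the first branch of \eqref{eq:cv_rates} becomes
\[
f(x_T) - f^* \;\leq\; \frac{w_0}{1 + \tilde{T}\, C_\gamma^{1}},
\]
which is the announced $\mathcal{O}(1/t)$ rate. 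Finally I would note that Lemma~\ref{lem:strong_wolfe_primal_gap} bridges $f(x_T)-f^*$ and $w(x_T,\mathcal{S}_T)$ in the right direction, so the primal convergence statement in the corollary follows from the strong-Wolfe-gap bounds of Theorem~\ref{th:convergence_restart_schemes_linear} without further work.

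There is essentially no hard step here; the only subtlety I would flag is the $\theta=0$ case, since $(\theta,c)$-\ref{eq:LGC} with $\theta = 0$ is a vacuous-looking statement that nonetheless must be explicitly justified on a compact feasible region in order to invoke Theorem~\ref{th:convergence_restart_schemes_linear}. Everything else is plug-in bookkeeping: evaluating $r = 1/(1-\theta)$ at the two endpoints $\theta \in \{0, 1/2\}$ and reading off the corresponding branch of \eqref{eq:cv_rates}.
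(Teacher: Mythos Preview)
Your proposal is correct and matches the paper's intent: the corollary is stated without proof because it is immediate from Theorem~\ref{th:convergence_restart_schemes_linear} once Lemma~\ref{lem:Scaling_polytope} supplies the $\delta$-\ref{eq:Scaling} hypothesis for polytopes, and you have correctly filled in the endpoint checks $\theta\in\{0,1/2\}$. Your final remark about Lemma~\ref{lem:strong_wolfe_primal_gap} is unnecessary, since Theorem~\ref{th:convergence_restart_schemes_linear} already states its conclusion in terms of $f(x_T)-f^*$ (the passage from $w(x_R,\mathcal{S}_R)$ to the primal gap uses only the convexity inequality in Lemma~\ref{lem:zero-gap}, already absorbed in the theorem's proof).
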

Note also that for $r \rightarrow 2$, we recover the same complexity rates as for $r=2$
\[
\underset{r\rightarrow 2}{\lim}~~ \frac{1}{\Big( 1 + \tilde{T} C_{\gamma}^r \Big)^{\frac{1}{2-r}}} = \exp\Big(-\frac{\gamma}{e^{2\gamma}}\frac{\tilde{T}}{8 C_f^{A} \mu }\Big)~.
\]

\noindent The complexity bounds in Theorem \ref{th:convergence_restart_schemes_linear} depend on $\gamma$, which controls the convergence rate.
Optimal choices of $\gamma$ depend on $r$, a constant that we generally do not know nor observe.
However, in the following we show that simply picking $\gamma = 1/2$ leads to optimal complexity bounds up to a constant factor.
In fact, picking a constant gamma (independent of $r$) we also recover a simple version of \citep[Algorithm 1]{braun2019blended} (without the cheaper Weak Separation Oracle that replaces the Linear Minimization Oracle).

\begin{proposition}[Robustness in $\gamma$]\label{prop:robustness_choice_gamma}
Suppose $f$ satisfies the $r$-strong Wolfe primal bound~\eqref{eq:strong_wolfe_primal_gap} with $r>0$.
Write $\gamma^*(r)$ as the optimal choice of $\gamma>0$ in the coarser complexity bounds \eqref{eq:cv_rates} of Theorem~\ref{th:convergence_restart_schemes_linear} where $\tilde{T}$ is lower bounded by $\bar{T}=T - |\mathcal{S}_0|$.
Consider running Algorithm~\ref{algo:scheduled_fractional} with $\gamma = 1/2$ and the same assumptions as in Theorem~\ref{th:convergence_restart_schemes_linear}, the output $\hat{x}$ satisfies
\[
h(\hat{x})\leq \sqrt{\frac{e}{4\big(\sqrt{e} - 1\big)}} w_0\frac{1}{\Big( 1 + \bar{T} C_{\gamma^*(r)}^r \Big)^{\frac{1}{2-r}}}~~\text{ when } 1\leq r<2,
\]
where
\[
C_{\gamma}^r = \frac{e^{\gamma (2-r)}-1}{ 16 e^{2\gamma} C_f^{A} \mu w(x_0,\mathcal{S}_0)^{r-2}},
\]
as in \eqref{eq:fct_gamma_rate}. When $r=2$, we have $\gamma^*(r) = 1/2$.
\end{proposition}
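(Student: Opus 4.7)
The proof splits according to the two regimes of Theorem~\ref{th:convergence_restart_schemes_linear}. For $r=2$, the bound depends on $\gamma$ only through the factor $\gamma e^{-2\gamma}$; its derivative $(1-2\gamma)e^{-2\gamma}$ vanishes uniquely at $\gamma=1/2$, so $\gamma^*(2)=1/2$ and this half of the claim is immediate.

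For $1\leq r<2$, I would first identify $\gamma^*(r)$ explicitly. Writing $G(\gamma)\triangleq e^{-\gamma r}-e^{-2\gamma}$, so that $C_\gamma^r$ is proportional to $G(\gamma)$ with the remaining factors independent of $\gamma$, the first-order condition $G'(\gamma)=-re^{-\gamma r}+2e^{-2\gamma}=0$ gives $e^{\gamma(2-r)}=2/r$, hence
\[
\gamma^*(r)=\frac{\ln(2/r)}{2-r},
\]
which lies in $[1/2,\ln 2]$ for $r\in[1,2]$ and tends to $1/2$ as $r\to 2$. Short calculations then produce the closed forms $G(\gamma^*(r))=(r/2)^{r/(2-r)}(2-r)/2$ and $G(1/2)=e^{-r/2}-e^{-1}$.

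Using the elementary inequality $(1+\tilde T a)/(1+\tilde T b)\leq a/b$, valid for all $\tilde T\geq 0$ whenever $a\geq b\geq 0$, the ratio of the $\gamma=1/2$ complexity bound of Theorem~\ref{th:convergence_restart_schemes_linear} to the bound evaluated at $\gamma^*(r)$ is at most
\[
R(r)\triangleq\left(\frac{G(\gamma^*(r))}{G(1/2)}\right)^{\!1/(2-r)}.
\]
The main obstacle is to show that $R$ is uniformly bounded on $[1,2)$ by the constant announced in the statement. A Taylor expansion of both $G(\gamma^*(r))$ and $G(1/2)$ around $r=2$ shows $R(r)\to 1$, while a direct evaluation at $r=1$ gives $G(\gamma^*(1))=1/4$ and $G(1/2)=(\sqrt e-1)/e$, producing the quantity $e/(4(\sqrt e-1))$ from which the announced constant naturally arises. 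I expect the delicate point to be proving monotonicity of $R$ on $[1,2)$; my plan is to attack it either by computing the logarithmic derivative $(\ln R)'(r)$ and exploiting the optimality condition $re^{-\gamma^*(r)r}=2e^{-2\gamma^*(r)}$ to simplify the resulting expression, or by verifying the equivalent pointwise inequality $G(\gamma^*(r))\leq G(1/2)\bigl(e/(4(\sqrt e-1))\bigr)^{2-r}$ directly for $r\in[1,2)$. Combining the ratio estimate with $h(\hat x)\leq w(\hat x)$ and the $\gamma=1/2$ bound of Theorem~\ref{th:convergence_restart_schemes_linear} then yields the final inequality.
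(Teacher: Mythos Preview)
Your approach is essentially the paper's: find $\gamma^*(r)=\ln(2/r)/(2-r)$ by maximizing $G(\gamma)=e^{-\gamma r}-e^{-2\gamma}$, form the ratio of the two bounds, claim it is decreasing in $r$, and evaluate at $r=1$. The only structural difference is ordering: you apply the elementary inequality $(1+\tilde T a)/(1+\tilde T b)\le a/b$ \emph{before} the monotonicity step, obtaining the $\tilde T$-free function $R(r)$, whereas the paper asserts monotonicity of the $\tilde T$-dependent ratio $H(r)$ first and only then applies the elementary inequality at $r=1$; your ordering is a bit cleaner since it reduces the monotonicity claim to a function of one variable. Note that the paper does not actually prove the monotonicity either --- it is simply asserted --- so your identification of this as the delicate point accurately reflects the state of the argument in the paper as well. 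One discrepancy to flag: since the exponent $1/(2-r)$ equals $1$ at $r=1$, your computation yields $R(1)=e/(4(\sqrt e-1))$ rather than its square root; the square root in the stated constant appears to come from a slip in the paper's expression for $H(1)$, not from anything you are missing.
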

\begin{proof}
When $1 \leq r < 2$, from Theorem \ref{th:convergence_restart_schemes_linear} we have
\BEQ\label{eq:bound_r_not_2}
 f(x_T) - f^* \leq w_0\frac{1}{\Big( 1 + \bar{T} C_{\gamma}^r \Big)^{\frac{1}{2-r}}}.
\EEQ
With definition of $C_{\gamma}^r$ in \eqref{eq:fct_gamma_rate}, 
minimizing \eqref{eq:bound_r_not_2} is equivalent to maximizing (for $\gamma > 0$)
\[
B(\gamma) = \Big(\frac{e^{\gamma (2-r)}-1}{e^{2\gamma}}\Big).
\]
Hence the optimum schedule parameter $\gamma^*(r)$ is
\[
\gamma^*(r) = \frac{\ln(2)-\ln(r)}{2-r}~~\text{when }1 \leq r < 2.
\]
In particular $\gamma^*(r)\in ]1/2;\text{ln}(2)]$. Let's now show that the bound in \eqref{eq:bound_r_not_2} obtained with the optimal $\gamma^*(r)$ is comparable to the bound obtained with $\gamma = \frac{1}{2}$. The function 
\[
H(r) = \frac{     \Big( 1 + \bar{T} C_{\gamma^*(r)}^r \Big)^{\frac{1}{2-r}}      }{     \Big( 1 + \bar{T} C_{1/2}^r \Big)^{\frac{1}{2-r}}}
\]
is decreasing in $r$. Write $\tilde{C} \triangleq 8 C_f^{A}\mu w(x_0,\mathcal{S}_0)$, we have $C^1_{\gamma^*(1)} = 1/(4\tilde{C})$ and $C^{1}_{1/2} = \frac{\sqrt{e} - 1}{e}/\tilde{C}$ hence
\[
H(1) = \sqrt{ \frac{1 + \frac{\bar{T}}{\tilde{C}} \frac{1}{4}}{1 + \frac{\bar{T}}{\tilde{C}} \frac{\sqrt{e}-1}{e} }} \leq \sqrt{\frac{e}{4\big(\sqrt{e} - 1\big)}}~.
\]
Hence, with $H(1)\geq H(r)$, we get for any $r\in[1, 2[$ 
\[
\frac{ 1 }{ \Big( 1 + \bar{T} C_{1/2}^r \Big)^{\frac{1}{2-r}}} \leq   \sqrt{\frac{e}{4\big(\sqrt{e} - 1\big)}}
\frac{1}{\Big( 1 + \bar{T} C_{\gamma^*(r)}^r \Big)^{\frac{1}{2-r}}}.
\]
When $r=2$, the optimal choice for $\gamma$ is $1/2$, maximizing the function ${\gamma}/{e^{2\gamma}}$.
\end{proof}

In Figure \ref{fig:exp_multiple_gamma}, we illustrate the convergence behavior of Algorithm \ref{algo:scheduled_fractional} along with that of the Away Frank-Wolfe. 
The algorithms have a similar behavior in the primal gap $f(x_t) - f(x^*)$.
For numerically competitive \textit{corrective} versions of Frank-Wolfe, see, e.g., \citep{garber2016linear,bashiri2017decomposition,combettes2020boosting} and references therein.

\begin{figure*}[h!]
	\begin{center}
		\begin{subfigure}[t]{0.35\linewidth}
			\includegraphics[width=\linewidth]{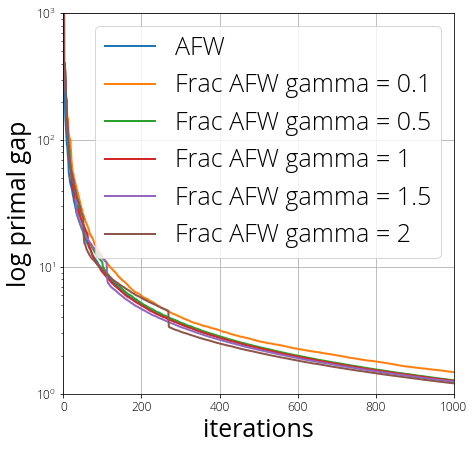}
		\end{subfigure}
		\begin{subfigure}[t]{0.35\linewidth}
			\includegraphics[width=\linewidth]{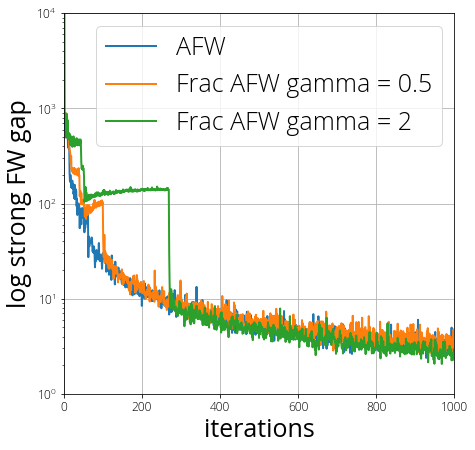}
		\end{subfigure}
		\caption{Representative examples on Lasso with various values of $\gamma$ in restart schemes of Algorithm \ref{alg:frac-AFW}.}
		\label{fig:exp_multiple_gamma}
	\end{center}
\end{figure*}

\section{Analysis under H\"older Smoothness}\label{sec:Holder}

In the following we generalize our results on convergence rates using a refined
regularity assumption on $f$. A differentiable function $f$ is
$(L,s)$-H\"older smooth on $\mathcal{C}$ when
\[ 
\|\nabla f(x) - \nabla f(y)\|_2 \leq
L \|x - y\|_2^{s-1}, \quad \mbox{for $x,y\in\mathcal{C},$}
\]
with $s\in]1,2]$. H\"older smoothness interpolates between non-smooth ($s=1$) and smooth ($s=2$) assumptions. We write the analog of the away curvature \eqref{eq:away_curvature} for $(L,s)$-H\"older smooth functions as
\[
C_{f,s}^{A} \triangleq\underset{\substack{x,u,v\in\mathcal{C}\\\eta\in [0,1] \\
    y=x+\eta (u-v)}}{\text{sup }}{\frac{s}{\eta^s}\big(
  f(y)-f(x)-\eta\langle\nabla f(x),u-v\rangle \big)}.  
\]
Note that as in \eqref{eq:away_curvature}, $f$ needs to be defined on the Minkowski sum $\mathcal{C}^A$. Let us now provide equivalent results for the complexity of Fractional Away-step Frank-Wolfe algorithm and the complexity bound of the constant restart scheme with $(L,s)$-H\"older smooth functions.

\begin{proposition}[H\"older Smooth Complexity]
\label{prop:max_iter_fracFW_holder}
Let $f$ be a $(L,s)$-H\"older smooth convex function with away curvature \(C_{f,s}^{A}\) such that the $r$-strong Wolfe primal bound in~\eqref{eq:strong_wolfe_primal_gap} holds on $\mathcal{C}$ with $\mu>0$. Let $\gamma > 0$ and assume $x_0\in \mathcal{C}$ is such that $e^{-\gamma} w(x_0, \mathcal{S}_0) / 2 \leq C_{f,s}^{A}$. Algorithm~\ref{alg:frac-AFW} outputs an iterate $x_T\in \mathcal{C}$ such that
\[
  w(x_T,\mathcal{S}_T) \leq w(x_0, \mathcal{S}_0) e^{-\gamma}
\]
after at most (with $r=\frac{1}{1-\theta}$)
\[
  T \leq |\mathcal{S}_0| - |\mathcal{S}_T| + 2^{1+\frac{s}{s-1}} \frac{s}{s-1} e^{\frac{s}{s-1}\gamma} \mu \big(C_{f,s}^{A}\big)^{\frac{1}{s-1}} w(x_0, \mathcal{S}_0)^{r-\frac{s}{s-1}}
\]
iterations, where $\mathcal{S}_0$ and $\mathcal{S}_T$ are the supports of respectively $x_0$ and $x_T$.
\end{proposition}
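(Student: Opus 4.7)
The plan is to mimic the structure of the proof of Proposition \ref{prop:max_iter_fracFW} line by line, replacing the quadratic upper model coming from the away curvature $C_f^{A}$ by the H\"older upper model
\[
f(x_t+\eta d_t)\le f(x_t)+\eta\nabla f(x_t)^T d_t+\frac{\eta^s}{s}C_{f,s}^{A}.
\]
Exactly as before, the test in line~\ref{line:test_criterion} of Algorithm~\ref{alg:frac-AFW} together with the decomposition \eqref{eq:decomp_strong_gap} forces every chosen direction $d_t$ to satisfy $-\nabla f(x_t)^T d_t>e^{-\gamma}w_0/2$ for as long as the exit condition has not been reached, where $w_0\triangleq w(x_0,\mathcal S_0)$.

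The core computation is a one-dimensional optimization: for any full-progress step I bound the per-iteration primal gain by
\[
f(x_t)-f(x_{t+1})\ge\max_{\eta\in[0,\eta_{\max}]}\Big\{\eta\, e^{-\gamma}w_0/2-\frac{\eta^s}{s}C_{f,s}^{A}\Big\}.
\]
Setting the derivative in $\eta$ to zero yields the unconstrained optimum
\[
\eta^{\star}=\Big(\tfrac{e^{-\gamma}w_0/2}{C_{f,s}^{A}}\Big)^{\frac{1}{s-1}},
\]
which lies in $[0,1]$ precisely thanks to the burn-in hypothesis $e^{-\gamma}w_0/2\le C_{f,s}^{A}$, and which lies strictly below $\eta_{\max}$ on any away step that is not a drop step. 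Plugging $\eta^\star$ back in and using $(\eta^\star)^{s-1}C_{f,s}^{A}=e^{-\gamma}w_0/2$, the per-step progress is
\[
f(x_t)-f(x_{t+1})\ge\frac{s-1}{s}\,\frac{(e^{-\gamma}w_0/2)^{\frac{s}{s-1}}}{(C_{f,s}^{A})^{\frac{1}{s-1}}},
\]
which for $s=2$ recovers the bound \eqref{eq:curvature_lower_bound} of Proposition~\ref{prop:max_iter_fracFW}.

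Summing over the $T_f$ full-progress steps (FW or non-drop away) and using the telescoping upper bound $f(x_0)-f(x_T)\le f(x_0)-f^\star\le\mu w(x_0)^r\le\mu w_0^r$ coming from the $r$-strong Wolfe primal bound and the inequality $w(x)\le w(x,S)$, I obtain
\[
T_f\le\frac{s}{s-1}\,2^{\frac{s}{s-1}}\,e^{\frac{s}{s-1}\gamma}\,\mu\,(C_{f,s}^{A})^{\frac{1}{s-1}}\,w_0^{\,r-\frac{s}{s-1}}.
\]
Finally, the active-set accounting is inherited verbatim from the smooth case: every drop step removes a vertex and every Frank-Wolfe step can add at most one, so $|\mathcal S_0|-T_d+T_{FW}=|\mathcal S_T|$ with $T_{FW}\le T_f$, giving $T=T_d+T_f\le|\mathcal S_0|-|\mathcal S_T|+2T_f$. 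Combining with the bound on $T_f$ above produces the stated inequality with the constant $2^{1+s/(s-1)}\tfrac{s}{s-1}$.

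The only real obstacle is the step-length argument on away steps: I must check that on a non-drop away step the unconstrained maximizer $\eta^\star$ is admissible, i.e.\ both $\eta^\star\le 1$ (ensured by the burn-in condition) and $\eta^\star\le\eta_{\max}=\alpha_t^{s_t}/(1-\alpha_t^{s_t})$; the latter is by definition of ``not a drop step'' since otherwise the capped step $\eta_{\max}$ is taken and the iteration is counted in $T_d$. Everything else is a mechanical substitution of the H\"older exponent $s$ for $2$ into the proof of Proposition~\ref{prop:max_iter_fracFW}, and one verifies that letting $s\to 2$ recovers the smooth bound up to the identification $C_{f,2}^{A}=C_f^{A}$.
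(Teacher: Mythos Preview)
Your proof is correct and follows essentially the same approach as the paper's own argument: both replace the quadratic curvature model by the H\"older model, compute the unconstrained maximizer $\eta^\star=\big(e^{-\gamma}w_0/(2C_{f,s}^{A})\big)^{1/(s-1)}$, use the burn-in hypothesis to ensure $\eta^\star\le 1$ on Frank--Wolfe steps and the definition of non-drop steps to ensure $\eta^\star\le\eta_{\max}$ on away steps, and then combine the per-step progress with the $r$-strong Wolfe primal bound and the drop-step accounting $T\le|\mathcal S_0|-|\mathcal S_T|+2T_f$. One cosmetic remark: in your final paragraph you require ``both $\eta^\star\le 1$ and $\eta^\star\le\eta_{\max}$'' on a non-drop away step, but only the latter is relevant there (the constraint is $[0,\eta_{\max}]$, not $[0,1]$); the $\eta^\star\le 1$ condition matters only for Frank--Wolfe steps.
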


\begin{proof}
The proof is very similar to that required for smooth-functions, so we only detail key points. The update direction satisfies
\[
(-\nabla f(x_t))^T d_t > e^{-\gamma} w_0/2~.
\]
Applying the definition of the H\"older curvature
\[
f(x_t) - f(x_t + \eta d_t) \geq \underset{\eta\in[0,\eta_{\text{max}}]}{\text{max }} \{ \eta e^{-\gamma} w_0/2 - \frac{\eta^s}{s} C_{f,s}^{A}\} = \underset{\eta\in[0,\eta_{\text{max}}]}{\text{max }} g(\eta)~.
\]
The unconstrained maximum of $g$ is reached at $\eta^* = \Big(e^{-\gamma} \frac{w_0}{2C_{f,s}^{A}}\Big)^{\frac{1}{s-1}}$. Hence with the burn-in phase hypothesis, we guarantee $\eta^*\leq 1$. With classical arguments, for all non-drop steps, the progress in the objective function value is lower bounded by
\[
f(x_t) - f(x_t + \eta d_t) \geq \frac{1}{\big(C_{f,s}^{A}\big)^{\frac{1}{s-1}}} \frac{s-1}{s} 2^{-\frac{s}{s-1}} e^{-\gamma \frac{s}{s-1}} w_0^{\frac{s}{s-1}}.
\]
It finally follows that
\[
T \leq 2 \mu w_0^{r-\frac{s}{s-1}} 2^{\frac{s}{s-1}} \frac{s}{s-1} e^{\gamma \frac{s}{s-1}} + |\mathcal{S}_0| - |\mathcal{S}_T|~
\]
which is the desired bound.
\end{proof}

We are ready to establish the convergence rates of our restart scheme in the H\"older smooth case.

\begin{theorem}[H\"older rate for constant restart schemes]\label{th:convergence_restart_schemes_linear_holder}
Let $f$ be a $(L,s)$-H\"older smooth convex function with H\"older curvature \(C_{f,s}^{A}\), satisfying $(\theta,c)$-\ref{eq:LGC} on $\mathcal{C}$, and $\mathcal{C}$ satisfying a $\delta$-\ref{eq:Scaling} inequality. Let $\gamma > 0$ and assume $x_0\in K$ is such that $e^{-\gamma} w(x_0,\mathcal{S}_0) / 2 \leq C_{f,s}^{A}$. With $\gamma_k = \gamma$, the output of Algorithm \ref{algo:scheduled_fractional} satisfies
\begin{equation*}
    \begin{split}
    f(x_T) - f^* &\leq w_0\frac{1}{\Big( 1 + \tilde{T} C_{\gamma}^\tau \Big)^{\frac{1}{\tau}}}~~\text{ when } 1 \leq  r < \frac{s}{s-1}
    \end{split}
\end{equation*}
after $T$ steps, with $w_0 \triangleq w(x_0,\mathcal{S}_0)$, $\tilde{T}\triangleq T - (|\mathcal{S}_0| - |\mathcal{S}_T|)$, and $\tau \triangleq \frac{s}{s-1} - r$. Also
\[
C_{\gamma}^{\tau} \triangleq \frac{e^{\gamma \tau}-1}{ C_s e^{\frac{s}{s-1}\gamma} w(x_0)^\tau},
\]
with $C_s \triangleq 2^{1+\frac{s}{s-1}} \frac{s}{s-1} \frac{c}{\delta} \big(C_{f,s}^{A}\big)^{\frac{1}{s-1}}$.
\end{theorem}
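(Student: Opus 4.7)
The plan is to mirror the proof of Theorem \ref{th:convergence_restart_schemes_linear}, replacing Proposition \ref{prop:max_iter_fracFW} by its H\"older counterpart Proposition \ref{prop:max_iter_fracFW_holder}. The preparatory step invokes Lemma \ref{lem:strong_wolfe_primal_gap} to combine the $(\theta,c)$-\ref{eq:LGC} assumption on $f$ with the $\delta$-\ref{eq:Scaling} inequality on $\mathcal{C}$, yielding an $r$-strong Wolfe primal bound with $r = 1/(1-\theta)$ and $\mu = c/\delta$; this is precisely the hypothesis needed to invoke Proposition \ref{prop:max_iter_fracFW_holder} on each restart call. The burn-in condition $e^{-\gamma} w_0 / 2 \leq C_{f,s}^{A}$ transfers from the global hypothesis to each inner call since the strong Wolfe gap decreases monotonically across restarts.

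Next I would let $R$ denote the number of restarts completed once Algorithm \ref{algo:scheduled_fractional} has performed $T$ cumulative inner iterations. Telescoping the exit condition $w(x_i, \mathcal{S}_i) \leq e^{-\gamma} w(x_{i-1}, \mathcal{S}_{i-1})$ over the $R$ restarts yields $w(x_R, \mathcal{S}_R) \leq w_0 e^{-\gamma R}$. Applying Proposition \ref{prop:max_iter_fracFW_holder} to the $i$-th call, whose input gap is bounded by $w_0 e^{-\gamma (i-1)}$, and absorbing the drop-step overhead into the telescoping support-size term $|\mathcal{S}_0| - |\mathcal{S}_T|$, I would sum the per-call bounds to obtain
\[
\tilde{T} \leq C_s \, e^{s\gamma/(s-1)} \, w_0^{-\tau} \sum_{i=0}^{R-1} e^{\gamma i \tau} = C_s \, e^{s\gamma/(s-1)} \, w_0^{-\tau} \cdot \frac{e^{\gamma \tau R} - 1}{e^{\gamma \tau} - 1},
\]
with $\tau = s/(s-1) - r > 0$ and $\tilde{T} = T - (|\mathcal{S}_0| - |\mathcal{S}_T|)$.

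Finally, I would rearrange this inequality for $e^{-\gamma R}$, giving $e^{-\gamma R} \leq \bigl( 1 + \tilde{T} \, C_{\gamma}^{\tau} \bigr)^{-1/\tau}$ with $C_{\gamma}^{\tau}$ collecting the constants as in the statement, and close via $f(x_T) - f^{*} \leq w(x_T, \mathcal{S}_T) = w(x_R, \mathcal{S}_R) \leq w_0 e^{-\gamma R}$, where the first inequality follows from convexity exactly as in the proof of Lemma \ref{lem:zero-gap}. The main subtlety is that the exponent $r - s/(s-1) = -\tau$ is negative under the hypothesis $r < s/(s-1)$, so the per-call iteration bound grows as the input gap shrinks and the geometric sum has common ratio $e^{\gamma \tau} > 1$; the algebra that extracts a convergence rate in $\tilde{T}$ from this $R$-divergent series is where the careful bookkeeping lies. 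The boundary case $r = s/(s-1)$, analogous to $r = 2$ in the smooth case and yielding a linear rate, is excluded from the statement and would be handled by a separate limiting argument.
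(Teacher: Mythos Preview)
Your proposal is correct and follows essentially the same route as the paper's own proof: invoke Lemma~\ref{lem:strong_wolfe_primal_gap} to obtain the $r$-strong Wolfe primal bound, apply Proposition~\ref{prop:max_iter_fracFW_holder} to each restart, telescope the support sizes, sum the resulting geometric series with ratio $e^{\gamma\tau}$, and invert to bound $e^{-\gamma R}$. Your explicit remarks on the sign of $r - s/(s-1)$ and the excluded boundary case $r = s/(s-1)$ go slightly beyond what the paper spells out, but the argument is otherwise identical.
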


\begin{proof}
Denote by $R$ the number of restarts after $T$ total inner iterations. We get
\[
T \leq \sum_{i=0}^{R-1}{ |\mathcal{S}_i| - |\mathcal{S}_{i+1}|  + 2^{1+\frac{s}{s-1}} \frac{s}{s-1} e^{\frac{s}{s-1}\gamma} \big(C_{f,s}^{A}\big)^{\frac{1}{s-1}} \mu w(x_i, \mathcal{S}_i)^{r-\frac{s}{s-1}}}~.
\]
Since $w(x_i, \mathcal{S}_i)\leq w_0 e^{-\gamma i}$, it follows that
\[
T \leq |\mathcal{S}_0| - |\mathcal{S}_T| +  2^{1+\frac{s}{s-1}} \frac{s}{s-1} e^{\frac{s}{s-1}\gamma} \big(C_{f,s}^{A}\big)^{\frac{1}{s-1}} \mu w_0^{r-\frac{s}{s-1}}  \sum_{i=0}^{R-1} e^{-\gamma i (r-\frac{s}{s-1}) }~.
\]
Write $C_s = 2^{1+\frac{s}{s-1}} \frac{s}{s-1} \big(C_{f,s}^{A}\big)^{\frac{1}{s-1}} \mu$ and $\tau = \frac{s}{s-1} - r$ we have
\[
T \leq |\mathcal{S}_0| - |\mathcal{S}_T| + C_s e^{\frac{s}{s-1}\gamma} w_0^{r-\frac{s}{s-1}}  \frac{e^{\gamma R \tau}-1}{e^{\gamma \tau}-1},
\]
it follows that 
\[
e^{-\gamma R} \leq \frac{1}{\big(  1 + (T-(|\mathcal{S}_0|-|\mathcal{S}_T|)) \frac{(e^{\gamma\tau}-1)}{C_s e^{\frac{s}{s-1} \gamma} w(x_0)^\tau}  \big)^{\frac{1}{\tau}}}.
\]
which yields the desired result.
\end{proof}

Note that $r < \frac{s}{s-1}$ is always ensured because
$s\in]1,2]$. In particular we only get linear convergence when
$r=s=2$ as for gradient methods \citep{roulet2020sharpness}. We now show, as in Proposition~\ref{obs:burn_in_phase}, that the assumption $e^{-\gamma} w(x_0,\mathcal{S}_0) / 2 \leq C_f^{A}$ has a marginal impact on complexity when the function is $(L,s)$-H\"older smooth.

\begin{proposition}[Burn-in phase for H\"older smooth functions]\label{obs:burn_in_phase_holder}
After at most
\[
4 \frac{s}{s-1} \frac{e^{\gamma}}{\gamma} \ln\big(\frac{w_0}{2 C_{f,s}^{A}}\big) + |\mathcal{S}_0|
\]
cumulative iterations of Algorithm \ref{alg:frac-AFW}, with constant schedule parameter $\gamma > 0$, we get a point $x$ such that $e^{-\gamma} w(x, \mathcal{S}) / 2 \leq
C_{f,s}^{A}$ when $f$ is $(L,s)$-H\"older smooth with $s>1$.
\end{proposition}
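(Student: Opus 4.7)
The plan is to mirror the proof of Proposition~\ref{obs:burn_in_phase}, replacing the quadratic upper model of $f$ by the Hölder curvature model and recomputing the per-step progress when the burn-in condition fails. First, I would assume $e^{-\gamma} w_0/2 > C_{f,s}^{A}$ at the current restart and show that for every full-progress step the increment $f(x_t)-f(x_{t+1})$ is bounded below by a constant of order $e^{-\gamma} w_0$. Explicitly, by the definition of $C_{f,s}^{A}$ the progress along $d_t$ is at least
\[
\max_{\eta\in[0,1]} \bigl\{\eta\, r_t^\top d_t - \tfrac{\eta^s}{s} C_{f,s}^{A}\bigr\},
\]
and since $r_t^\top d_t > e^{-\gamma}w_0/2 > C_{f,s}^{A}$ the unconstrained optimum $\eta^\star=(r_t^\top d_t/C_{f,s}^{A})^{1/(s-1)}$ exceeds $1$, so line search caps at $\eta=1$, giving
\[
f(x_t)-f(x_{t+1}) \;\geq\; \frac{e^{-\gamma}w_0}{2} - \frac{C_{f,s}^{A}}{s} \;\geq\; \frac{s-1}{s}\cdot\frac{e^{-\gamma}w_0}{2}.
\]
Exactly as in Proposition~\ref{obs:burn_in_phase}, Lemma~\ref{lem:zero-gap} ensures that while the exit condition fails $x_t$ is not optimal, so this lower bound on progress is meaningful.

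Next I would combine this with the telescoping bound $f(x_0)-f(x^\star)\leq w_0$ coming from the strong Wolfe gap, which yields $T_f \leq \frac{2s}{s-1}e^{\gamma}$ full-progress steps per call, and therefore $T \leq T_d+T_f \leq 2T_f + |\mathcal{S}_0|-|\mathcal{S}_T| \leq \frac{4s}{s-1}e^{\gamma} + |\mathcal{S}_0|-|\mathcal{S}_T|$ total inner iterations for a single call of Algorithm~\ref{alg:frac-AFW}.

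Finally I would iterate the restarts. Each successive call shrinks the strong Wolfe gap by a factor $e^{-\gamma}$, so letting $N$ be the smallest integer with $e^{-N\gamma} w_0 \leq 2C_{f,s}^{A}e^{\gamma}$, one has $N \leq \frac{1}{\gamma}\ln\frac{w_0}{2C_{f,s}^{A}}$. Summing the per-call bound over the at most $N$ calls preceding the first iterate satisfying the burn-in condition, the telescoping of $|\mathcal{S}_{i}|-|\mathcal{S}_{i+1}|$ collapses to $|\mathcal{S}_0|$, and the cumulative iteration count is at most
\[
N \cdot \frac{4s}{s-1}e^{\gamma} + |\mathcal{S}_0| \;\leq\; 4\frac{s}{s-1}\frac{e^{\gamma}}{\gamma}\ln\!\Bigl(\frac{w_0}{2C_{f,s}^{A}}\Bigr) + |\mathcal{S}_0|,
\]
which is the target bound. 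One must also handle (as in the smooth case) the edge situation where the burn-in condition becomes satisfied strictly before the $N$-th call, but in that scenario the stopping index $i_0<N$ still satisfies $i_0 \leq \frac{1}{\gamma}\ln\frac{w_0}{2C_{f,s}^{A}e^\gamma}$, producing the same bound.

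The only step that departs from the smooth proof is the Hölder progress estimate with the step length capped at one; once that is in place, the $s=2$ factor $\tfrac{s-1}{s}=\tfrac12$ recovers Proposition~\ref{obs:burn_in_phase} exactly. The main mild subtlety will be checking that $r_t^\top d_t > e^{-\gamma}w_0/2$ continues to hold for Hölder-smooth $f$ — but this uses only the test in line~\ref{line:test_criterion} and decomposition~\eqref{eq:decomp_strong_gap} of $w(x_t,\mathcal{S}_t)$, both of which are smoothness-agnostic, so the argument carries over without modification.
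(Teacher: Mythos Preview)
Your proposal is correct and follows essentially the same approach as the paper's own proof: both establish the per-step progress bound $f(x_t)-f(x_{t+1}) \geq \tfrac{s-1}{s}\cdot\tfrac{e^{-\gamma}w_0}{2}$ by capping the H\"older model at $\eta=1$, deduce $T_f \leq \tfrac{2s}{s-1}e^{\gamma}$ and $T \leq \tfrac{4s}{s-1}e^{\gamma} + |\mathcal{S}_0|-|\mathcal{S}_T|$, and then sum over at most $\tfrac{1}{\gamma}\ln\tfrac{w_0}{2C_{f,s}^{A}}$ restarts. If anything, you are more explicit than the paper in justifying why $\eta^\star>1$, invoking Lemma~\ref{lem:zero-gap}, and treating the $i_0<N$ edge case.
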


\begin{proof}
  Assume we have $e^{-\gamma}w_0/2 > C_{f,s}^{A}$. Classically, the
  curvature argument ensures that we have for non-drop steps \BEAS
  f(x_t) - f(x_{t+1}) &\geq & \eta_t e^{-\gamma}w_0/2 - \frac{\eta_t^s}{s} C_{f,s}^{A}\\
  &\geq & e^{-\gamma}w_0/2 (1-1/s).  \EEAS Besides, $T_f$ being the
  number of full steps and $T$ the number of iterations before
  Fractional Away-step Frank-Wolfe stops,
\[
f(x_0) - f(x_T) \geq T_f e^{-\gamma}w_0/2 (1-1/s).
\]
Combining this with $f(x_0)-f(x_T) \leq f(x_0) - f(x^*) \leq w_0$ we get
\[
T_f \leq 2 e^{\gamma} \frac{s}{s-1}.
\]
Finally with the classical counting argument on drop steps, we obtain
\[
T \leq 4 e^{\gamma} \frac{s}{s-1} + |\mathcal{S}_0| - |\mathcal{S}_T|~.
\]
Denote $R$ the number of calls to the Fractional Away-step Frank-Wolfe before the last output $\hat{x}_R$ satisfies $e^{-\gamma}w(\hat{x},\mathcal{S}_{\hat{x}})/2 > C_{f,s}^{A}$. The strong Wolfe gap of the $N^{th}$ output of the Fractional Away-step Frank-Wolfe satisfies by definition
\[
w(\hat{x}_N) \leq e^{-N \gamma} w_0,
\]
hence we have
\[
R \leq \frac{1}{\gamma} \ln\Big(\frac{w_0}{2 C_{f,s}^{A}}\Big)~.
\]
Finally each round of the Fractional Away-step Frank-Wolfe under the initial assumption that $e^{-\gamma}w(\hat{x}_i,\mathcal{S}_{\hat{x}_i})/2 > C_{f,s}^{A}$ require at most $4 e^{\gamma} \frac{s}{s-1} + |\mathcal{S}_{\hat{x}_i}| - |\mathcal{S}_{\hat{x}_{i+1}}|$ iterations. Hence a total $T_t$ of
\BEAS
T_t &\leq & \sum_{i=1}^{R}{4 e^{\gamma} \frac{s}{s-1} + |\mathcal{S}_{\hat{x}_i}| - |\mathcal{S}_{\hat{x}_{i+1}}|}\\
&\leq & 4 R e^{\gamma} \frac{s}{s-1} + |\mathcal{S}_0|\\
&\leq & 4 \frac{s}{s-1} \frac{e^{\gamma}}{\gamma} \ln\Big(\frac{w_0}{2 C_{f,s}^{A}}\Big) + |\mathcal{S}_0|
\EEAS
which is the desired result.
\end{proof}

\section{Fractional Frank-Wolfe Algorithm}\label{sec:Frac_Frank_Wolfe}

In this section, we describe how H\"olderian error bounds coupled with a restart scheme yield improved convergence bounds for the vanilla Frank-Wolfe algorithm.

In Sections \ref{sec:sharpAWF}-\ref{sec:Holder}, relaxing strong convexity of $f$ using the $(\theta,c)$-\ref{eq:LGC} assumption lead to improved sub-linear rates using a restart scheme for the Away step variant of the Frank-Wolfe algorithm when the set of constraints $\mathcal{C}$ is a polytope. For these sets, away steps produce accelerated convergence rates that the vanilla Frank-Wolfe algorithm cannot achieve.

However, accelerated convergence hold for the vanilla Frank-Wolfe algorithm in other scenarios. For instance, when the solution of \eqref{eq:prob} is in the interior of the set and $f$ is strongly convex, the convergence of the vanilla Frank-Wolfe is linear.
In this vein, we define a fractional version of the Frank-Wolfe algorithm (Algorithm \ref{alg:frac-FW}) and analyse its restart scheme (Algorithm \ref{alg:restart-FW}) under the $(\theta,c)$-\ref{eq:LGC} condition in Section~\ref{ssec:optimum_interior}.
Although the Fractional Frank-Wolfe algorithm and the vanilla Frank-Wolfe algorithm perform the same iterations, the restart scheme produces a much simpler proof of improved convergence bounds. The fractional variant is also the structural basis for recent competitive versions of the Frank-Wolfe algorithm \citep{braun2017lazifying}.

Another acceleration scenario for the vanilla Frank-Wolfe algorithm is when the set of constraints $\mathcal{C}$ is strongly convex. Under some restrictive assumption on $f$, the classical analysis \citep[(5) in Theorem 6.1]{levitin1966constrained} exhibits a linear convergence rate. Recently \citep{garber2015faster} have shown a general $\mathcal{O}(1/T^2)$ sub-linear rate when $f$ and $\mathcal{C}$ are strongly convex. We will state new rates for the case where $f$ satisfies $(\theta,c)$-\ref{eq:LGC} and $\mathcal{C}$ is strongly convex, to provide a complete picture.

For completeness, we would like to mention that $\delta$-scaling for the away step Frank-Wolfe algorithm does not apply in the case where $\mathcal{C}$ is a strongly convex set. Lemma~\ref{lem:Scaling_polytope} does not hold anymore, and $PWidth$ can tend to zero in this case. 

\subsection{Restart schemes for Fractional Frank-Wolfe}
We now state the \textit{fractional} version of the (vanilla) Frank-Wolfe algorithm. 
The Fractional Frank-Wolfe algorithm~\ref{alg:frac-FW} is derived from Algorithm \ref{alg:frac-AFW} by replacing $w(x_0,\mathcal{S}_0)$ with $g(x_0)$, as in \eqref{eq:Frank_Wolfe_Dual_Gap} and dropping the away step update.

\begin{algorithm}
  \caption{Fractional Frank-Wolfe Algorithm}
  \label{alg:frac-FW}
  \begin{algorithmic}[1]
    \REQUIRE
      A smooth convex function $f$.
      Starting point \(x_{0} \in \mathcal{C}\).
      LP oracle \eqref{eq:lmo} and schedule parameter $\gamma > 0$.
    \STATE $t := 0$
    \WHILE{$g(x_t) > e^{-\gamma} g(x_0) $}
      \STATE
        \(v_t := \LP{\mathcal{C}}(\nabla f(x_{t})) \text{ and } d_t^{FW} := v_t - x_t\)
        \STATE $x_{t+1} := x_t + \eta_t d_t^{FW}$ with $\eta_t\in [0,1]$ via line-search
        \STATE $t := t+1$
    \ENDWHILE
    \ENSURE \(x_{t} \in \mathcal{C}\) such that $g(x_t) \leq e^{-\gamma} g(x_0)$
  \end{algorithmic}
\end{algorithm}

A constant restart scheme using Algorithm \ref{alg:frac-FW} for its inner iteration, recovers the Scaling Frank-Wolfe algorithm \citep[Algorithm 7: Parameter-free Lazy Conditional Gradient]{braun2017lazifying} up to a slight reformulation with the additional $\Phi_t$ parameter. The two algorithms have the same restart structure. However, the Scaling Frank-Wolfe algorithm additionally uses a weaker oracle (a so-called Weak Separation Oracle) than the Linear Optimization Oracle that we employ here. More precisely, the Scaling Frank-Wolfe algorithm does not necessarily require $v_t$ to be the exact nor an approximate solution of the Linear Minimization Problem, but rather to satisfy the condition $\langle -\nabla f(x_t), \, v_t - x_t \rangle > \Phi_t e^{-\gamma}$. As a consequence, $g(x_t)$ is not computed and $\Phi_t$ is only an upper bound on $g(x_t)$. This explains the difference in line \ref{line:difference} of Algorithm \ref{alg:restart-FW}.

\begin{algorithm}
  \caption{Restart Fractional Frank-Wolfe Algorithm}
  \label{alg:restart-FW}
  \begin{algorithmic}[1]
    \REQUIRE
      A smooth convex function $f$ with curvature~\(C_f\).
      Starting point \(x_{0} \in \mathcal{C}\). $\epsilon > 0$,
      LP oracle \eqref{eq:lmo} and schedule parameter $\gamma > 0$.
    \STATE $t := 0$ and $\Phi_0 := g(x_0)$
    \WHILE{$g(x_t) > \epsilon $}
      \STATE
        \(v_t := \LP{\mathcal{C}}(\nabla f(x_{t})) \text{ and } d_t^{FW} := v_t - x_t\)
       \IF{ $\langle -\nabla f(x_t), \, v_t - x_t \rangle > \Phi_t e^{-\gamma}$}\label{line:choice_frac_FW}
       \STATE $x_{t+1} := x_t + \eta_t d_t^{FW}$ with $\eta_t\in [0,1]$ via line-search
       \STATE $\Phi_{t+1} := \Phi_t$
       \ELSE
       \STATE $\Phi_{t+1} := g(x_t)$ (hence $\Phi_{t+1} < \Phi_t e^{-\gamma})$ \label{line:difference}
       \ENDIF
       \STATE $t := t+1$
    \ENDWHILE
  \end{algorithmic}
\end{algorithm}

\subsection{Optimum in the Interior of the Feasible Set}\label{ssec:optimum_interior}
We first recall that when the optimal solutions of \eqref{eq:optim_problem} are in the interior of $\mathcal{C}$, a version of the~\eqref{eq:Scaling} inequality is automatically satisfied. 
\eqref{eq:FW-scaling} replaces $w(x)$ by $g(x)$ and can be interpreted as a scaling inequality tailored to the (vanilla) Frank-Wolfe algorithm.
Note that the $\delta$ parameter depends on the relative distance of the optimal set $X^*$ to the boundary of $\mathcal{C}$.
This property has already been extensively used in, e.g., \citep{guelat1986some,garber2013linearly,garber2016linear}.

\begin{lemma}[FW $\delta$-scaling when optimum is in interior \citep{guelat1986some}]\label{lem:Scaling_interior}
Assume $\mathcal{C}$ is convex and $f$ convex differentiable.
Assume $X^*\subset\textbf{Int}(\mathcal{C})$  and choose $z>0$ such that $B(x^*,z) \subset\mathcal{C}$ for all $x^*\in X^*$. Then for all $x\in\mathcal{C}$ such that $d(x,X^*)\leq \frac{z}{2}$ we have
\BEQ\label{eq:FW-scaling}\tag{FW-Scaling}
g(x) \geq \frac{z}{2} \|\nabla f(x)\|,
\EEQ
where $g(x)$ is the Frank-Wolfe (dual) gap as defined in \eqref{eq:Frank_Wolfe_Dual_Gap}.
\end{lemma}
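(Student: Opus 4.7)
The plan is to lower bound $g(x)=\max_{v\in\mathcal{C}}\nabla f(x)^T(x-v)$ by exhibiting an explicit $\bar v\in\mathcal{C}$ obtained by moving from the nearest optimum $x^*$ a distance $z/2$ in the direction opposite to the projected gradient. All the work is in checking that this $\bar v$ indeed lies in $\mathcal{C}$ (using the ball assumption around $X^*$) and that the inner product $\nabla f(x)^T(x^*-\bar v)$ extracts exactly the norm of the projected gradient.

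Concretely, first I would fix $x^*\in \argmin_{y\in X^*}\|x-y\|$, so $\|x-x^*\|\le z/2$ by hypothesis. Writing $L$ for the linear subspace parallel to $\textbf{Aff}(\mathcal{C})$ (so $\textbf{Aff}(\mathcal{C})=x^*+L$), I interpret $\text{Proj}_{\textbf{Aff}(\mathcal{C})}$ as orthogonal projection onto $L$ and set $u\triangleq \text{Proj}_{L}(\nabla f(x))$. The result is trivial when $u=0$, so assume $u\neq 0$ and define
\[
\bar v \triangleq x^* - \frac{z}{2}\,\frac{u}{\|u\|}.
\]
Since $\bar v - x^* \in L$ and $\|\bar v - x^*\|=z/2\le z$, the hypothesis $B(x^*,z)\cap \textbf{Aff}(\mathcal{C}) \subset \mathcal{C}$ yields $\bar v \in \mathcal{C}$.

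Next I would decompose
\[
g(x)\ \ge\ \nabla f(x)^T(x-\bar v)\ =\ \nabla f(x)^T(x-x^*)\ +\ \nabla f(x)^T(x^*-\bar v).
\]
The first summand is nonnegative by convexity, because $\nabla f(x)^T(x-x^*)\ge f(x)-f(x^*)\ge 0$. For the second summand, the substitution gives
\[
\nabla f(x)^T(x^*-\bar v)\ =\ \frac{z}{2\|u\|}\,\nabla f(x)^T u\ =\ \frac{z}{2\|u\|}\,\|u\|^2\ =\ \frac{z}{2}\,\|u\|,
\]
where I used that $u\in L$ together with the projection identity $\nabla f(x)^T u = u^T u$. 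Combining the two bounds gives $g(x)\ge \tfrac{z}{2}\|\text{Proj}_{\textbf{Aff}(\mathcal{C})}(\nabla f(x))\|$, which is the claim.

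The only genuinely delicate point is the interpretation of $\text{Proj}_{\textbf{Aff}(\mathcal{C})}$: because $\textbf{Aff}(\mathcal{C})$ is affine rather than linear, one must be careful to project onto its parallel linear subspace $L$ so that the identity $\nabla f(x)^T u=\|u\|^2$ is legitimate; equivalently, one can observe that for any $y\in\textbf{Aff}(\mathcal{C})$, $\nabla f(x)^T(x^*-y)=u^T(x^*-y)$ since $x^*-y\in L$. Everything else is a one-line application of the inclusion $B(x^*,z)\cap\textbf{Aff}(\mathcal{C})\subset\mathcal{C}$ and of first-order convexity.
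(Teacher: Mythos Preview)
Your proof is correct and follows essentially the same idea as the paper's: exhibit a feasible test point obtained by moving a distance $z/2$ against the projected gradient, then use the definition of $g(x)$ as a maximum. The only difference is where the test point is anchored. The paper takes $\bar v = x - \tfrac{z}{2}\,u/\|u\|$ (centered at $x$), so feasibility follows from the triangle inequality $\|\bar v - x^*\|\le \|x-x^*\|+z/2\le z$, and the inner product $\nabla f(x)^T(x-\bar v)$ gives $\tfrac{z}{2}\|u\|$ directly with no extra term. You instead center at $x^*$, which makes feasibility immediate but produces the additional summand $\nabla f(x)^T(x-x^*)$, disposed of via first-order convexity. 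Both routes are equally short; yours has the merit of being explicit about interpreting $\text{Proj}_{\textbf{Aff}(\mathcal{C})}$ as projection onto the parallel linear subspace, a point the paper leaves implicit.
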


\begin{proof}
For $x\in B(x^*, \frac{z}{2})$, we have $x- \frac{z}{2}\frac{\nabla f(x)}{ \|\nabla f(x)\| }\in\mathcal{C}$.
Denote $v$ the Frank-Wolfe vertex, we have  $g(x)\triangleq \langle -\nabla f(x), \, v - x \rangle$. By optimality of $v$, we hence obtain
\[
g(x) \geq \langle -\nabla f(x), \, x-\frac{z}{2}\frac{ \nabla f(x) }{ \|\nabla f(x)\| } - x \rangle = \frac{z}{2} \|\nabla f(x)\|~,
\]
which is the desired result.
\end{proof}


We now bound the convergence rate of Algorithm \ref{alg:restart-FW} in the following proposition.

\begin{proposition}[Convergence Rate of Restart Fractional FW]
Let $f$ be a smooth convex function with curvature $C_f$ as defined in~\eqref{eq:curvature}, satisfying $(\theta,c)$-\ref{eq:LGC} on $\mathcal{C}$. Assume there exists $z>0$ such that $B(x^*,z)\subset\mathcal{C}$ for all $x^*\in X^*$. Let $\gamma > 0$ and assume  $x_0$ is such that $e^{-\gamma} g(x_0) \leq C_f$ and  $f(x_0)-f^*\leq \big(\frac{z}{2}\big)^{\frac{1}{\theta}}$ (burn-in phase). Then the output of Algorithm \ref{algo:scheduled_fractional} satisfies ($r= \frac{1}{1-\theta}$)
\begin{equation*}
  \left\{
    \begin{split}
    f(x_T) - f^* &\leq g_0\frac{1}{\Big( 1 + T C_{\gamma}^r \Big)^{\frac{1}{2-r}}}~~\text{ when } 1 \leq r < 2\\
    f(x_T) - f^* &\leq  g_0 \exp\left(-\frac{\gamma}{e^{2\gamma}}\frac{T}{8 C_f \mu }\right) ~~\text{ when } r=2~,
    \end{split}
  \right.
\end{equation*}
after $T$ steps, with $g_0=g(x_0)$. Also, with $\mu = (cz/2)^{1/(1-\theta)}$ we write
\[
C_{\gamma}^r \triangleq \frac{e^{\gamma (2-r)}-1}{ 2 e^{2\gamma} C_f \mu g(x_0)^{r-2}}.
\]
\end{proposition}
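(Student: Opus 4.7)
The plan is to mirror the proof of Theorem~\ref{th:convergence_restart_schemes_linear}, substituting the Frank-Wolfe dual gap $g(x)$ for the strong Wolfe gap $w(x)$ throughout and invoking Lemma~\ref{lem:Scaling_interior} in place of the polytope scaling inequality used inside Lemma~\ref{lem:strong_wolfe_primal_gap}.

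First I would establish a one-call complexity bound for Algorithm~\ref{alg:frac-FW} playing the role of Proposition~\ref{prop:max_iter_fracFW}. Fix a start point with gap $g_0 = g(x_0)$ and assume the burn-in $e^{-\gamma} g_0 \leq C_f$. As long as the exit test fails one has $-\nabla f(x_t)^{T} d_t^{FW} = g(x_t) > e^{-\gamma} g_0$, so the standard smoothness/curvature estimate
\[
f(x_t) - f(x_{t+1}) \;\geq\; \max_{\eta \in [0,1]} \Bigl\{ \eta\, e^{-\gamma} g_0 - \tfrac{\eta^2}{2} C_f \Bigr\}
\]
together with the burn-in hypothesis (which places the unconstrained optimizer inside $[0,1]$) yields a per-step progress of at least $g_0^2 e^{-2\gamma}/(2 C_f)$. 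Dividing the total available progress $f(x_0) - f^* \leq \mu g_0^r$ by this per-step lower bound gives a one-call iteration count of at most $2 C_f e^{2\gamma} \mu\, g_0^{r-2}$.

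Second, I need the Frank-Wolfe primal bound $f(x) - f^* \leq \mu g(x)^r$ with $r = 1/(1-\theta)$, as the vanilla-FW analog of Lemma~\ref{lem:strong_wolfe_primal_gap}. The derivation is identical after a projection: if $\tilde{x} \in \arg\min_{x^* \in X^*} \|x - x^*\|$ then $x - \tilde{x}$ lies in the linear part of $\mathbf{Aff}(\mathcal{C})$, so convexity and Cauchy--Schwarz give
\[
f(x) - f^* \;\leq\; \bigl\| \mathrm{Proj}_{\mathbf{Aff}(\mathcal{C})}(\nabla f(x)) \bigr\|\, \|x - \tilde{x}\|,
\]
and then $(\theta, c)$-\ref{eq:LGC} bounds the second factor while \ref{eq:FW-scaling} (Lemma~\ref{lem:Scaling_interior}) bounds the first, producing the required inequality with $\mu$ as stated. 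The subtle point is that Lemma~\ref{lem:Scaling_interior} is valid only when $d(x, X^*) \leq z/2$; this is precisely why the second burn-in condition $f(x_0) - f^* \leq (z/2)^{1/\theta}$ is imposed. Through \ref{eq:LGC} it places $x_0$ in the $z/2$ neighborhood, and since Algorithm~\ref{alg:frac-FW} with line search is a descent method, $f(x_t) \leq f(x_0)$ is preserved across every subsequent restart, so the scaling inequality remains applicable throughout.

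Finally I would close with the same geometric-sum bookkeeping as in Theorem~\ref{th:convergence_restart_schemes_linear}. If $R$ restarts produce $x_T$ then by the exit condition $g(x_R) \leq g_0 e^{-\gamma R}$, and summing the one-call bound across the $R$ calls (using the worst-case starting gap $g_0 e^{-\gamma (k-1)}$ at the $k$-th restart) yields
\[
T \;\leq\; 2 C_f \mu e^{2\gamma} g_0^{r-2} \sum_{k=0}^{R-1} e^{-\gamma k (r-2)},
\]
a geometric series that collapses to the quantity $C_\gamma^r$ of the statement for $1 \leq r < 2$, and to $T \leq 2 C_f \mu e^{2\gamma} R$ when $r = 2$. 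Solving each of these for $e^{-\gamma R}$ in terms of $T$ and combining with $f(x_T) - f^* \leq g(x_T) \leq g_0 e^{-\gamma R}$ (the first by convexity, the second by the exit guarantee) delivers the two stated rates. I expect the main technical obstacle to be verifying that the relative-interior neighborhood $d(x_t, X^*) \leq z/2$ is maintained over all restarts so that \ref{eq:FW-scaling} is legitimately invoked at every step; the line-search descent combined with the HEB-based burn-in closes this cleanly.
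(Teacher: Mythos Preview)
Your proposal is correct and follows essentially the same route as the paper: establish the per-step progress lower bound $g_0^2 e^{-2\gamma}/(2C_f)$ from curvature and the exit test, combine the interior $\delta$-scaling of Lemma~\ref{lem:Scaling_interior} with $(\theta,c)$-\ref{eq:LGC} to get the Frank-Wolfe primal bound $f(x)-f^*\le \mu g(x)^r$, and then replay the geometric-sum argument of Theorem~\ref{th:convergence_restart_schemes_linear} with $g$ in place of $w$. Your handling of the $d(x_t,X^*)\le z/2$ invariant via descent plus the HEB burn-in is exactly what the paper does as well.
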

\begin{proof}
First note that for all $t$, we have $d(x_t,X^*)\leq \frac{z}{2}$. Indeed $f(x_t)-f^*\leq f(x_{t-1})-f^*\leq \big(\frac{z}{2}\big)^{\frac{1}{\theta}}$. Hence by $(\theta,c)$-\ref{eq:LGC} we have 
\[
\min_{x^*\in X^*} \|x_t - x^*\| \leq (f(x_t) - f^*)^\theta \leq \frac{z}{2}.
\] 
We can now apply lemma \ref{lem:Scaling_interior} to get for all $x_t$
\[
g(x_t) \geq \frac{z}{2} \|\nabla f(x_t)\|,
\]
and as in Lemma \ref{lem:strong_wolfe_primal_gap}, \ref{eq:FW-scaling} and $(\theta, c)$-\ref{eq:LGC} leads to a Wolfe primal gap (with $\mu = (cz/2)^{1/(1-\theta)} >0$)
\[
f(x) - f^* \leq \mu g(x)^r,
\]
where $r=1/(1-\theta)$. The proof then follows exactly that of Fractional Away Frank-Wolfe and its restart schemes (see Proposition \ref{prop:max_iter_fracFW} and Theorem \ref{th:convergence_restart_schemes_linear}), replacing $w(x)$ with $g(x)$. The only change comes from the upper bound on $T$, the number of iterations needed for Fractional Frank-Wolfe to stop. We recall the key steps to get this bound and update its value. At each iteration
\[
f(x_t) - f(x_{t+1}) \geq \max_{\eta\in[0,1]} \{ \eta e^{-\gamma} g(x_0) - \frac{\eta^2}{2} C_f\},
\]
such that because of assumption $e^{-\gamma}g(x_0) < C_f$, we have
\[
f(x_t) - f(x_{t+1}) \geq \frac{1}{2} \frac{g(x_0)^2}{e^{2\gamma} C_f}.
\]
Hence on one side
\[
f(x_0) - f(x_T) \geq \frac{T}{2} \frac{g(x_0)^2}{e^{2\gamma} C_f}.
\] 
And on the other side, using the $r$-Wolfe primal bound $f(x_0) - f(x_T)\leq \mu g(x_0)^r$ and finally
\[
T \leq 2\mu C_f e^{2\gamma} g(x_0)^{r-2}.
\] 
The restart scheme is then controlled exactly as in the proof of \ref{th:convergence_restart_schemes_linear}.
\end{proof}

Assuming that $e^{-\gamma} g(x_0) \leq C_f$ and $f(x_0)-f^*\leq \big(\frac{z}{2}\big)^{\frac{1}{\theta}}$ simplify the statements and it is automatically satisfied after a burn-in phase.
However it is fundamental to assume that there exists $z>0$ s.t. $B(x^*,z)\subset\mathcal{C}$ for all $x^*\in X^*$.
Indeed this ensures that the optimal set is in the interior of~$\mathcal{C}$.
Note also that a robustness result similar to that of Proposition \ref{prop:robustness_choice_gamma} holds here.

\subsection{Strongly Convex Constraint Set}\label{ssec:strongly_convex_set}
When~$\mathcal{C}$ is strongly convex, strong convexity of $f$ leads to a better convergence rate than the sub-linear $\mathcal{O}(1/T)$.
The original analysis of \citep[(5) in Theorem 6.1]{levitin1966constrained} assumes $\|\nabla f(x)\|\geq \epsilon >0$ (irrespective of the strong convexity of $f$) and hence $(\theta,c)$-\ref{eq:LGC} cannot be understood as a relaxation of the assumption.
This analysis provides a linear convergence rate when the unconstrained minimum of $f$ is strictly outside of $\mathcal{C}$. \S \ref{ssec:optimum_interior} shows linear convergence when $x^*$ is in the interior of $\mathcal{C}$. Hence the remaining case is when the unconstrained minimum of $f$ is in $\partial\mathcal{C}$, the boundary of $\mathcal{C}$ (an arguably rare instance).

Recently, the analysis of \citep{garber2015faster} closes this gap by providing a general convergence rate of $\mathcal{O}(1/T^2)$ under a (slightly) weaker assumption than strong convexity of $f$ \citep[see (2)]{garber2015faster}.
Although the asymptotic rate regime of \citep{garber2015faster} is significantly less appealing than the linear convergence rate in \citep{levitin1966constrained} and hence seemingly a marginal improvement in term of applicability, the situation is a bit more complicated: the bound of \citep{garber2015faster} benefits from much better conditioning and can easily dominate other bounds near $\partial\mathcal{C}$. 
In particular, the conditioning of \citep{levitin1966constrained} depends on the $\epsilon$ lower bounding the norm of the gradient on the constraint set, which can be arbitrarily small. 
The analysis of \citep{garber2015faster} adapts to $(\theta,c)$-\ref{eq:LGC}, as was detailed in \citep{YiAdapativeFW} and we recall this below for the sake of completeness.

\begin{theorem}\label{th:FW_C_strongly_convex}
Consider $\mathcal{C}$ an $\alpha$-strongly convex set and $f$ a convex $L$-smooth function \eqref{eq:function_smoothness}. Assume $(\theta,c)$-\ref{eq:LGC} for $f$. Then the iterate of (vanilla) Frank-Wolfe is such that $f(x_T) - f(x^*) = \mathcal{O}\big(1/T^{1/(1-\theta)}\big)$ for $\theta\in[0, 1/2]$.
\end{theorem}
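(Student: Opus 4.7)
The plan is to mimic the argument of \citep{garber2015faster}, but carrying the H\"olderian error bound through the final recursion rather than invoking strong convexity of $f$. First I would recall the key geometric lemma from \citep{garber2015faster}: if $\mathcal{C}$ is $\alpha$-strongly convex and $v_t = \LP{\mathcal{C}}(\nabla f(x_t))$, then the Frank-Wolfe vertex satisfies
\[
\|v_t - x_t\|^2 \;\leq\; \frac{2\,g(x_t)}{\alpha\,\|\nabla f(x_t)\|},
\]
which is proved by testing a point of the form $\tfrac12(x_t+v_t) + \tfrac{\alpha}{8}\|v_t - x_t\|^2\,\nabla f(x_t)/\|\nabla f(x_t)\|\in\mathcal{C}$ against the optimality of $v_t$.

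Next, using $L$-smoothness along the direction $d_t = v_t - x_t$ and the above bound,
\[
f(x_{t+1}) \;\leq\; f(x_t) - \eta_t\, g(x_t) + \frac{\eta_t^2 L}{2}\,\|v_t - x_t\|^2
\;\leq\; f(x_t) - \eta_t\, g(x_t)\Bigl(1 - \frac{\eta_t L}{\alpha \|\nabla f(x_t)\|}\Bigr).
\]
Optimizing over $\eta_t \in [0,1]$ and treating the saturated and interior cases separately gives, after a short burn-in, the per-iteration progress
\[
h(x_t) - h(x_{t+1}) \;\geq\; \frac{\alpha}{4L}\,g(x_t)\,\|\nabla f(x_t)\|,
\]
where $h(x) := f(x) - f^*$. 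By convexity and line search, $g(x_t) \geq h(x_t)$.

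The H\"olderian error bound enters at this point to lower bound $\|\nabla f(x_t)\|$. By convexity, $h(x_t) \leq \langle \nabla f(x_t), x_t - x^*\rangle \leq \|\nabla f(x_t)\|\cdot \|x_t - x^*\|$, and $(\theta,c)$-\ref{eq:LGC} gives $\|x_t - x^*\| \leq c\,h(x_t)^\theta$, hence
\[
\|\nabla f(x_t)\| \;\geq\; \frac{1}{c}\,h(x_t)^{1-\theta}.
\]
Substituting back yields the scalar recursion
\[
h_{t+1} \;\leq\; h_t - K\,h_t^{\,2-\theta}, \qquad K := \frac{\alpha}{4Lc},
\]
valid once $t$ is past the burn-in that ensures $h_t\le 1$ (otherwise the saturated regime $\eta_t = 1$ is active and gives even faster decrease).

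Finally, I would invoke the standard lemma on such recursions: if $h_{t+1}\leq h_t - K h_t^{p}$ with $p\in(1,2]$, then $h_t = O(t^{-1/(p-1)})$, proven by comparison with the ODE $\dot h = -K h^p$ or by induction on $h_t \leq (h_0^{-(p-1)} + (p-1)K\,t)^{-1/(p-1)}$ after absorbing a constant for the discrete-to-continuous gap. Here $p = 2-\theta$ so $1/(p-1) = 1/(1-\theta)$, giving $h_T = O(T^{-1/(1-\theta)})$ as claimed.

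The main obstacle is the case analysis around the line search: when $\alpha \|\nabla f(x_t)\|/(2L) \geq 1$ we are in the ``linear regime'' where full step size gives geometric decrease, while for small gradients we are in the scaling regime handled above. One has to verify that both regimes are consistent with the final recursion (the linear regime only helping), and that the HEB-derived gradient lower bound does not itself force $x_t$ into the trivial regime. A secondary technicality is that the HEB gives a bound on the distance to the \emph{nearest} optimum, so one must pick $x^*\in\argmin_{y\in X^*}\|x_t - y\|$ when applying it inside the convexity estimate, which is harmless since $h(x_t)$ is unchanged.
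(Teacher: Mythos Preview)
Your proposal is correct and follows essentially the same route as the paper: combine the Garber--Hazan per-step progress bound (which you re-derive rather than cite) with the HEB-induced lower bound $\|\nabla f(x_t)\|\geq c^{-1}h_t^{1-\theta}$ to obtain the recursion $h_{t+1}\leq h_t(1-K h_t^{1-\theta})$, then solve it. The only cosmetic differences are that the paper packages the case analysis as $h_{t+1}\leq h_t\max\{1/2,\,1-\tfrac{\alpha}{8Lc}h_t^{1-\theta}\}$ and resolves the recursion via Lemma~\ref{lem:solve_recu} rather than the ODE comparison you sketch.
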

\begin{proof}
From \citep[Lemma 1]{garber2015faster}, $L$-smoothness of $f$ combines with $\alpha$-strong convexity of $\mathcal{C}$ gives
\[
h_{t+1} \leq h_t \cdot \text{max }\Big\{\frac{1}{2}, 1 - \frac{\alpha \|\nabla f(x)\|}{8 L}\Big\}~.
\]
On the other hand with $(\theta, c)$-\ref{eq:LGC} and by convexity of $f$, \eqref{eq:identity_LGC} applies
\begin{eqnarray*}
\Big(f(x) - f(x^*)\Big)^{1-\theta} &\leq& c\cdot \underset{y\in X^*}{\text{min }} \frac{\langle\nabla f(x), \, x- x^*\rangle}{\|x - x^*\|}\\
 &\leq& c~\|\nabla f(x)\|~.
\end{eqnarray*}
Note that with $\theta=1/2$, this is the sufficient condition \citep[(2)]{garber2015faster} implied by strong convexity that leads to $\mathcal{O}(1/T^2)$ convergence rates. Hence combining both we recover this recursive inequality for $h_t = f(x_t) - f(x^*)$
\[
h_{t+1} \leq h_{t}\cdot\text{max }\Big\{ \frac{1}{2}, 1 - \frac{\alpha}{8Lc}h_t^{1-\theta}\Big\}~.
\]
When $\theta=0$ (convexity), this leads to the classical $\mathcal{O}(1/T)$ rate. When $\theta=1/2$ the above recursion leads to a $\mathcal{O}(1/T^2)$ rate as in \citep[proof of Theorem 2]{garber2015faster}. With Lemma \ref{lem:solve_recu} in Appendix \ref{appendix:non_standard_rec_rel}, for any non-negative constants $(k, C)$, such that $\frac{2 - 2^{\beta}}{2^\beta -1} \leq k$ and $ \text{max}\{h_0 k^{1/\beta}, \frac{2}{\big( (\beta - (1-\beta)(2^\beta - 1)) M\big)^{1/\beta}} \} \leq C$ (with $M\triangleq \frac{\alpha}{8L}$), we have
\[
h_t \leq \frac{C}{(t+1)^{1/(1-\theta)}}.
\]
and the desired result.
\end{proof}

Theorem \ref{th:FW_C_strongly_convex} interpolates between the general $\mathcal{O}\big(1/T\big)$ rate for smooth convex functions and the $\mathcal{O}\big(1/T^2\big)$ rate for smooth and strongly convex functions.


\section{Conclusion}
We derived a variant of the Away-step Frank-Wolfe algorithm and showed improved complexity bounds when the strong Wolfe gap satisfies a generalized strong convexity condition. The {\L}ojasiewicz factorization lemma shows that this condition actually holds generically for some value of the parameters, producing complexity bounds of the form $O(1/\epsilon^q)$ with $q\leq 1$, thus smoothly interpolating between the complexity of the classical FW algorithm with rate $O(1/\epsilon)$ and that of the Away-step Frank-Wolfe with rate $O(\log(1/\varepsilon))$. Our method is adaptive to the value of the generalized strong convexity parameters and robustly yields optimal performance. 

\section*{Acknowledgements}
\label{sec:acknowledgements}
This research was partially funded by Deutsche Forschungsgemeinschaft (DFG) through the DFG Cluster of Excellence MATH+ and the Research Campus Modal funded by the German Federal Ministry of Education and Research (fund numbers 05M14ZAM,05M20ZBM).
Research reported in this paper was partially supported by NSF CAREER Award CMMI-1452463. T.K. acknowledges funding from the CFM-ENS chaire {\em les mod\`eles et sciences des donn\'ees}. AA is at CNRS \& d\'epartement d'informatique, \'Ecole normale sup\'erieure, UMR CNRS 8548, 45 rue d'Ulm 75005 Paris, France,  INRIA and PSL Research University. The authors would like to acknowledge support from the {\em ML \& Optimisation} joint research initiative with the {\em fonds AXA pour la recherche} and Kamet Ventures, as well as a Google focused award.
This paper is a journal version of the conference version \citep{kerdreux2019restarting}.
The authors are also thankful to J\'er\^ome Bolte for insights about subanalycity of the strong Wolfe gap when $\mathcal{C}$ is a polytope.

{\small \bibliographystyle{abbrvnat}
\bibsep 1ex
\bibliography{MainPerso}}

\clearpage

\appendix

\section{One shot application of the Fractional Away-step Frank Wolfe}
\label{sec:oneShot}
\noindent Fractional Away-step Frank-Wolfe output a point $x_t\in\mathcal{C}$ s.t. $w(x_t) \leq e^{-\gamma}w_0$. 
Hence, running once Fractional Away-step Frank-Wolfe with a large value of $\gamma$ allows finding an approximate minimizer with the desired precision.
The following lemma proves a sublinear $\mathcal{O}(1/T)$ convergence rate which corresponds to the convergence rate of the Frank-Wolfe algorithm. Importantly the rate does not depend on $r$. Hence there is no hope of observing linear convergence for the strongly convex case.

\begin{lemma}
Let $f$ be a smooth convex function,  $\epsilon > 0$ be a target accuracy, and $x_0 \in \mathcal{C}$ be an initial point. Then for any $\gamma > \ln \frac{w(x_0)}{ \epsilon}$, Algorithm \ref{alg:frac-AFW} satisfies:
$$f(x_T) - f^* \leq \epsilon,$$
for $T \geq \frac{2C_f^{\mathcal{A}}}{\epsilon}$.
\end{lemma}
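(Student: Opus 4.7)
The plan is to exploit the large value of $\gamma$ to force Algorithm \ref{alg:frac-AFW} to behave as the vanilla Frank-Wolfe algorithm until the primal gap $h_t \triangleq f(x_t) - f^*$ falls below $\epsilon$, at which point the claim reduces to the classical $\mathcal{O}(1/\epsilon)$ FW rate with curvature $C_f^{A}$.

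First, I would show that whenever $h_t > \epsilon$ the test on Line \ref{line:test_criterion} selects the Frank-Wolfe direction. By convexity,
\[
-\nabla f(x_t)^T d_t^{FW} = g(x_t) \geq f(x_t) - f^* = h_t > \epsilon.
\]
Since $\gamma > \ln(w(x_0)/\epsilon)$ yields $e^{-\gamma} w(x_0) < \epsilon$, and (at the natural vertex-support starting point) $w(x_0,\mathcal{S}_0) = w(x_0)$, the criterion $-\nabla f(x_t)^T d_t^{FW} > e^{-\gamma} w(x_0,\mathcal{S}_0)/2$ is automatically satisfied. Hence $d_t = d_t^{FW}$ and $\eta_{\max}=1$ throughout this regime.

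Second, applying the curvature bound from \eqref{eq:away_curvature} to the FW update, for any $\eta \in [0,1]$,
\[
h_{t+1} \leq h_t - \eta g(x_t) + \frac{\eta^2}{2} C_f^{A} \leq (1-\eta) h_t + \frac{\eta^2}{2} C_f^{A},
\]
where the last step again uses $g(x_t)\geq h_t$. The line search picks the minimizer, so in particular the admissible choice $\eta = \min\{1, h_t/C_f^{A}\}$ supplies at least the standard one-step FW decrease.

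Third, I would invoke the textbook induction for smooth convex Frank-Wolfe to conclude $h_t \leq 2 C_f^{A}/(t+2)$ for every $t$ along the pure-FW trajectory. Taking $T \geq 2 C_f^{A}/\epsilon$ then gives $h_T \leq \epsilon$. Should the \textsc{while} loop exit earlier with $w(x_t,\mathcal{S}_t)\leq e^{-\gamma}w(x_0,\mathcal{S}_0) < \epsilon$, the decomposition \eqref{eq:decomp_strong_gap} together with the inequality $h_t \leq w(x_t,\mathcal{S}_t)$ established in the proof of Lemma \ref{lem:zero-gap} again yields $h_t < \epsilon$, and the line-search monotonicity of $f(x_t)$ preserves the bound.

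The main obstacle is essentially bookkeeping: one must verify rigorously that no Away step can ever be chosen while $h_t > \epsilon$, and properly account for the possibility of early termination of the \textsc{while} loop. Once these two points are nailed down, the remainder of the argument reduces to the classical $\mathcal{O}(1/\epsilon)$ Frank-Wolfe convergence proof, with the $A$-curvature $C_f^A$ playing the role of the standard curvature.
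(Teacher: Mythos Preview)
Your proposal is correct and follows essentially the same route as the paper. The paper phrases the key observation via its contrapositive---it stops the algorithm the moment an Away step would be triggered, noting that this forces $g(x_t)\leq e^{-\gamma}w(x_0,\mathcal{S}_0)/2<\epsilon/2$ and hence $h_t<\epsilon$---whereas you argue directly that $h_t>\epsilon$ implies $g(x_t)>\epsilon>e^{-\gamma}w(x_0,\mathcal{S}_0)/2$ so the FW branch is always taken; from there both arguments collapse to the standard $h_t\leq 2C_f^{A}/t$ induction for vanilla Frank-Wolfe. Your flag about identifying $w(x_0,\mathcal{S}_0)$ with $w(x_0)$ is a genuine bookkeeping point that the paper also leaves implicit.
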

\begin{proof}
We can stop the algorithm as soon as the criterion $w(x_t) < \epsilon$ in step 2 is met or we observe an away step, whichever comes first. In former case we have $f(x_t) - f^* \leq w(t) < \epsilon$, in the latter it holds
$$ f(x_t) - f^* \leq -\nabla f(x_t)(d_t^{FW}) \leq \epsilon/2 < \epsilon.$$
Thus, when the algorithms stops, we have achieved the target accuracy and it suffices to bound the number of iterations required to achieve that accuracy. Moreover, while running, the algorithm only executes Frank-Wolfe and we drop the FW superscript in the directions; otherwise we would have stopped.

From the proof of Proposition \ref{prop:max_iter_fracFW}, we have each Frank-Wolfe step ensures progress of the form
$$f(x_t) - f(x_{t+1}) \geq
\begin{cases}
\frac{\langle -\nabla f(x_t), \, d_t \rangle^2}{2C_f^{\mathcal{A}}} & \text{ if } \langle -\nabla f(x_t), \, d_t \rangle \leq C_f^{\mathcal{A}} \\
\langle -\nabla f(x_t), \, d_t \rangle - C_f^{\mathcal{A}} / 2 & \text{otherwise}.
\end{cases}
$$
For convenience, let $h_t \triangleq f(x_t) - f^*$. By convexity we have $h_t \leq \langle -\nabla f(x_t), \, d_t \rangle$, so that the above becomes
$$f(x_t) - f(x_{t+1}) \geq
\begin{cases}
\frac{h_t^2}{2C_f^{\mathcal{A}}} & \text{ if } h_t \leq C_f^{\mathcal{A}} \\
h_t - C_f^{\mathcal{A}} / 2 & \text{ otherwise}.
\end{cases},
$$
and moreover observe that the second case can only happen in the very first step: $h_{1} \leq h_0 - (h_0 - C_f^{\mathcal{A}} / 2) = C_f^{\mathcal{A}} / 2 \leq 2 C_f^{\mathcal{A}} / t$ for $t=1$ providing the start of the following induction: we claim
$h_t \leq \frac{2C_f^{\mathcal{A}}}{t}$.

Suppose we have established the bound for $t$, then for $t+1$, we have
$$h_{t+1} \leq \left(1-\frac{h_t}{2 C_f^{\mathcal{A}}}\right) h_t \leq \frac{2C_f^{\mathcal{A}}}{t} - \frac{2C_f^{\mathcal{A}}}{t^2} \leq \frac{2C_f^{\mathcal{A}}}{t+1}.$$
The induction is complete and it follows that the algorithm requires $T \geq \frac{2C_f^{\mathcal{A}}}{\epsilon}$ to reach $\epsilon$-accuracy.
\end{proof}

\section{Non Standard Recurrence Relation}\label{appendix:non_standard_rec_rel}
This is a technical Lemma derived in \citep[proof of Theorem 1. ]{YiAdapativeFW} and we repeat it here for the sake of completeness. It is used in Section \ref{sec:Frac_Frank_Wolfe}.

\begin{lemma}[Recurrence and sub-linear rates]\label{lem:solve_recu}
Consider a sequence $(h_n)$ of non-negative numbers.  Assume there exists $M>0$ and $0 < \beta \leq 1$ s.t.
\BEQ\label{eq:generic_recurrence_relation}
h_{t+1} \leq h_t ~\text{max}\{1/2, 1 - M h_t^{\beta} \},
\EEQ
then $h_T = \mathcal{O}\big(1/T^{1/\beta} \big)$. More precisely for all $t\geq 0$,
\[
h_t \leq \frac{C}{(t+k)^{1/\beta}}
\]
with $(k,C)$ such that $\frac{2 - 2^{\beta}}{2^\beta -1} \leq k$ and $ \text{max}\{h_0 k^{1/\beta}, 2(C^\prime/M)^{1/\beta}\} \leq C$, with $C^\prime\geq \frac{1}{\beta-(1-\beta)(2^\beta -1)}$.
\end{lemma}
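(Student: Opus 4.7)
The plan is to prove the bound $h_t \leq C/(t+k)^{1/\beta}$ by induction on $t$, leveraging the two-case structure of the recurrence. Let $a \triangleq 1/\beta \geq 1$. The base case $t=0$ is immediate from the hypothesis $h_0 k^{1/\beta} \leq C$, so the work lies entirely in the inductive step, which I would split according to which branch of the $\max$ in \eqref{eq:generic_recurrence_relation} is active.

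For the ``easy'' branch (when $1 - Mh_t^\beta \leq 1/2$, equivalently $h_t^\beta > 1/(2M)$, so $h_{t+1} \leq h_t/2$), I would use the inductive hypothesis to get $h_{t+1} \leq C/(2(t+k)^a)$ and then reduce the desired bound to the purely numerical inequality $(1 + 1/(t+k))^a \leq 2$, i.e.\ $t+k \geq 1/(2^\beta - 1)$. Matching this with the shifted form of the induction produces a constraint on $k$ of the type $k \geq (2-2^\beta)/(2^\beta - 1)$ stated in the lemma.

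For the ``contractive'' branch (when $h_t^\beta \leq 1/(2M)$, so $h_{t+1} \leq h_t(1 - Mh_t^\beta)$), the key observation is that the map $u \mapsto u(1 - Mu^\beta)$ is increasing on $[0, ((1+\beta)M)^{-1/\beta}]$, and since $\beta \leq 1$ this interval contains the range $[0, (2M)^{-1/\beta}]$ allowed in this branch. Thus I can apply the inductive hypothesis inside $u(1 - Mu^\beta)$ and, using $a\beta = 1$, reduce the claim to showing
\[
\left(1 + \tfrac{1}{t+k}\right)^{a}\left(1 - \tfrac{MC^\beta}{t+k}\right) \leq 1.
\]
This can be handled by a Bernoulli-type inequality (e.g.\ $(1+x)^a \leq e^{ax}$ and $1-y \leq e^{-y}$, or a sharper version yielding the exact constant), which converts the condition into a lower bound on $C^\beta$ of the form $MC^\beta \geq C'$, producing the second assumption $C \geq 2(C'/M)^{1/\beta}$.

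The main obstacle is calibrating the two constants $(k, C)$ so that both cases close simultaneously at \emph{every} step $t \geq 0$, including the worst case $t=0$. Case A pins down $k$ in terms of $\beta$ alone, while Case B pins down $C$ in terms of $M$ and $\beta$; the constant $C' \geq 1/(\beta - (1-\beta)(2^\beta - 1))$ specifically arises from using a refined inequality of the form $(1+x)^a(1-y) \leq 1$ that is tight enough to cover the transition region where the algorithm switches between the two branches. Once both closing conditions are verified, the induction yields $h_t \leq C/(t+k)^{1/\beta} = O(1/t^{1/\beta})$ for all $t$, as claimed.
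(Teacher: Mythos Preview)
Your induction scheme and the halving branch match the paper exactly, including the derivation of $k \geq (2-2^\beta)/(2^\beta-1)$ from $(1+1/(t+k))^{1/\beta} \leq 2$.

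The contractive branch has a gap. You observe that $\phi(u)=u(1-Mu^\beta)$ is increasing on $[0,((1+\beta)M)^{-1/\beta}]$ and that $h_t$ lies there, then substitute the inductive upper bound $C/(t+k)^{1/\beta}$ for $h_t$. But monotonicity only lets you push in an upper bound that \emph{itself} lies in the increasing range, and nothing guarantees $C/(t+k)^{1/\beta} \leq ((1+\beta)M)^{-1/\beta}$ for small $t$; past the peak $\phi$ decreases and the substitution is invalid. (The overshoot is fixable: when it occurs one still has $h_t \leq (2M)^{-1/\beta}$, hence $h_{t+1}\leq h_t/2$, and the halving argument closes the step. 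But this must be said.)

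The paper sidesteps monotonicity entirely by a different device: it further splits the contractive branch on whether $h_t \leq C/(2(t+k)^{1/\beta})$. If so, $h_{t+1}\leq h_t$ together with the halving inequality finishes; if not, one has \emph{both} $h_t \leq C/(t+k)^{1/\beta}$ and $h_t \geq C/(2(t+k)^{1/\beta})$, and bounds $h_t(1-Mh_t^\beta)$ by inserting the upper bound in the first factor and the lower bound inside $1-Mh_t^\beta$. This produces $(1+1/(t+k))^{1/\beta}\bigl(1-M(C/2)^\beta/(t+k)\bigr) \leq 1$ rather than your version with $MC^\beta$, and is precisely the source of the factor $2$ in the hypothesis $C\geq 2(C'/M)^{1/\beta}$. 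The exact value of $C'$ then comes from a separate technical lemma showing $(1+x)^{1/\beta}\leq 1+C'x$ on $x\in(0,2^\beta-1]$ and concluding via $(1+C'x)(1-C'x)\leq 1$; your Bernoulli/exponential route would deliver \emph{some} constant but not this one, so your final claim that the argument ``produc[es] the second assumption $C\geq 2(C'/M)^{1/\beta}$'' does not follow from what you wrote.
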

\begin{proof}
Let $(k,C)$ satisfying the condition in Lemma \ref{lem:solve_recu}. Let's show by induction that 
\[
h_t \leq \frac{C}{(t+k)^{1/\beta}}~.
\]
For $t=0$, it is true because we assumed $h_0 k^{1/\beta} \leq C$. Let $t \geq 1$. Consider the case where the maximum in the right hand side of \eqref{eq:generic_recurrence_relation} is obtained with $\frac{1}{2}$, then
\begin{eqnarray*}
h_{t+1} &\leq& \frac{C}{(t+k+1)^{1/\beta}} \Big(\frac{t+k+1}{t+k}\Big)^{1/\beta} \frac{1}{2}\\
&\leq& \frac{C}{(t+k+1)^{1/\beta}},
\end{eqnarray*}
because $k\geq \frac{2 - 2^{\beta}}{2^\beta - 1}$. Otherwise we have 
\[
h_{t+1} \leq h_t (1 - M h_t^{\beta}).
\]
If $h_t \leq \frac{C}{2(t+k)^{1/\beta}}$, conclusion holds as before. Otherwise assume $\frac{C}{2(t+k)^{1/\beta}} \leq h_t \leq \frac{C}{(t+k)^{1/\beta}}$ and \eqref{eq:generic_recurrence_relation} implies
\begin{eqnarray*}
h_{t+1} &\leq& \frac{C}{(t+k)^{1/\beta}} \Big( 1 - M \Big(\frac{C}{2}\Big)^{\beta} \frac{1}{t+k}\Big)\\
h_{t+1} &\leq& \frac{C}{(t+k+1)^{1/\beta}} \Big(1 + \frac{1}{t+k}\Big)^{1/\beta}\Big( 1 - M \Big(\frac{C}{2}\Big)^{\beta} \frac{1}{t+k}\Big)
\end{eqnarray*}
From Lemma \ref{lem:technical_rec}, for $C\geq 2(C^\prime/M)^{1/\beta}$ with $C^\prime\geq \frac{1}{\beta-(1-\beta)(2^\beta -1)}$, we have
\[
h_{t+1} \leq \frac{C}{(t+k+1)^{1/\beta}} \Big(1 + \frac{C^\prime}{t+k}\Big)\Big( 1 - \frac{C^\prime}{t+k}\Big) \leq \frac{C}{(t+k+1)^{1/\beta}}~,
\]
which proves the induction.
\end{proof}

\begin{lemma}\label{lem:technical_rec}
For any $t\geq 1$, we have
\BEQ\label{eq:techincal_rec}
\Big(1+ \frac{1}{t+k}\Big)^{1/\beta} \Big(1 - M \Big(\frac{C}{2}\Big)^\beta \frac{1}{t+k}\Big) \leq \Big(1 + \frac{C^\prime}{t+k}\Big)\Big(1 - \frac{C^\prime}{t+k}\Big),
\EEQ
where $k\geq \frac{2 - 2^{\beta}}{2^\beta - 1}$, $\beta\in]0,1]$, $C^\prime\geq \frac{1}{\beta-(1-\beta)(2^\beta -1)}$ and $C$ such that $C\geq 2 (C^\prime/M)^{1/\beta}$.
\end{lemma}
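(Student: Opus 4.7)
My plan is to reduce the lemma to a clean linear comparison using convexity of $(1+u)^{1/\beta}$. Set $u := 1/(t+k)$ and $a := M(C/2)^{\beta}$. The hypothesis $C \geq 2(C'/M)^{1/\beta}$ immediately gives $a \geq C'$, while $t \geq 1$ together with $k \geq (2-2^\beta)/(2^\beta - 1)$ yields $t + k \geq 1/(2^\beta - 1)$, so $u \in (0, 2^\beta - 1]$. The target inequality then reads $(1+u)^{1/\beta}(1-au) \leq 1 - (C'u)^2$ on this range of $u$.

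The key analytic step is a chord bound: since $1/\beta \geq 1$ for $\beta \in (0,1]$, the map $u \mapsto (1+u)^{1/\beta}$ is convex, and the chord joining its endpoint values $(0,1)$ and $(2^\beta - 1, 2)$ gives
\[
(1+u)^{1/\beta} \leq 1 + \frac{u}{2^\beta - 1}, \qquad u \in [0, 2^\beta - 1].
\]
To promote this to $(1+u)^{1/\beta} \leq 1 + C'u$, I must verify $C' \geq 1/(2^\beta - 1)$, which given the hypothesis $C' \geq 1/(\beta - (1-\beta)(2^\beta - 1))$ reduces to the scalar inequality $\beta - (1-\beta)(2^\beta - 1) \leq 2^\beta - 1$, equivalent to $(2-\beta)2^\beta \geq 2$ on $\beta \in (0, 1]$. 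I would dispatch this by noting that both sides equal $2$ at $\beta \in \{0, 1\}$, and that the derivative $2^\beta\bigl((2-\beta)\ln 2 - 1\bigr)$ is positive at $\beta = 0$ and negative at $\beta = 1$, making $(2-\beta)2^\beta$ unimodal and hence bounded below by its endpoint value $2$ throughout $[0, 1]$.

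Once $(1+u)^{1/\beta} \leq 1 + C'u$ is established, I would pair it with the linear factor $1 - au \leq 1 - C'u$ (from $a \geq C'$, both factors being non-negative in the regime $au \leq 1/2$ that is the one of interest for the application in Lemma~\ref{lem:solve_recu}) to get
\[
(1+u)^{1/\beta}(1-au) \leq (1+C'u)(1-au) \leq (1+C'u)(1-C'u) = 1 - (C'u)^2,
\]
which is exactly the claim. The main obstacle is the scalar verification $(2-\beta)2^\beta \geq 2$: this is elementary, but the cleanest route is not immediately apparent from the rather opaque form of the hypothesis on $C'$, and it is really the one place where the specific value of the constant matters. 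Once it is in hand, everything else is a direct application of the chord bound and termwise ordering of the two linear factors.
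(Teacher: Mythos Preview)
Your proof is correct and follows the same overall decomposition as the paper: both arguments reduce the claim to the two pieces $(1+u)^{1/\beta}\le 1+C'u$ on $u\in(0,2^\beta-1]$ and $1-au\le 1-C'u$ (from $a\ge C'$), and then multiply. The only genuine difference is how the first piece is established. The paper takes logarithms and shows that $h(u)=\log(1+C'u)-\tfrac{1}{\beta}\log(1+u)$ has $h'(u)\ge 0$ on the interval; the condition $C'\ge 1/(\beta-(1-\beta)(2^\beta-1))$ is \emph{exactly} the threshold for $h'(2^\beta-1)\ge 0$, so this route explains where the opaque constant comes from. Your convexity/chord argument is more geometric and arguably more transparent, but it requires the auxiliary scalar check $(2-\beta)2^\beta\ge 2$ to link the chord slope $1/(2^\beta-1)$ to the stated bound on $C'$; you handle that check correctly, though it slightly obscures why that particular lower bound on $C'$ was chosen. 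Your parenthetical about the positivity of $1-au$ is appropriate: the paper's own proof takes logarithms of $1-M(C/2)^\beta x$ and $1-C'x$, so it carries the same implicit positivity assumption, which is indeed satisfied in the application inside Lemma~\ref{lem:solve_recu}.
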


\begin{proof}
Write $x = \frac{1}{t+k}$. Because $t\geq 1$ and $k\geq \frac{2 - 2^{\beta}}{2^\beta - 1}$, we have $x\in ]0, 2^{\beta}-1]$. \eqref{eq:techincal_rec} is equivalent to
\[
\frac{1}{\beta}\log(1+ \frac{1}{t+k}) + \log(1 - M \Big(\frac{C}{2}\Big)^\beta x) \leq \log(1 + C^\prime x) + \log(1 - C^\prime x)
\]
Choosing $C$ greater or equal to $2 \Big(C^\prime/M\Big)^{1/\beta}$ ensures that $\log(1 - M \Big(\frac{C}{2}\Big)^\beta x)\leq \log(1 - C^\prime x)$. 
Also for $C^\prime\geq \frac{1}{\beta-(1-\beta)(2^\beta -1)}$, the function $h(x)\triangleq \log(1 + C^\prime x) - \frac{1}{\beta}\log(1 + x)$ is non-decreasing (and hence non-negative) on $]0,2^\beta -1]$.
Reciprocally for $C^\prime\geq \frac{1}{\beta-(1-\beta)(2^\beta -1)}$ and $C\geq \Big(C^\prime/M\Big)^{1/\beta}$, \eqref{eq:techincal_rec} holds.
\end{proof}

\end{document}